\numberwithin{equation}{section}
\theoremstyle{plain}
\newtheorem{Th}{Theorem}[section]
\newtheorem{Lemma}[Th]{Lemma}
\newtheorem{Cor}[Th]{Corollary}
\newtheorem{Prop}[Th]{Proposition}
 \theoremstyle{definition}
\newtheorem{Def}[Th]{Definition}
\newtheorem{Rem}[Th]{Remark}
\newtheorem{?}[Th]{Problem}
\newcommand*\R{\mathbb{R}}
\newcommand*\Om{\Omega}
\newcommand{\Div}{\text{div}_x}
\newcommand{\vectoru}{\mathbf{u}}
\newcommand{\vectorU}{\mathbf{U}}
\newcommand{\vectorv}{\mathbf{v}}
\newcommand{\vectorV}{\mathbf{V}}
\newcommand{\vectorm}{\mathbf{m}}
\newcommand{\dx}{\text{ d}x}
\newcommand{\dt}{\text{ d}t\;}
\newcommand{\dy}{\text{ d}y}
\newcommand{\vectorphi}{\pmb{\varphi}}
\newcommand{\Nu}{\mathcal{V}}
\newcommand{\Ov}[1]{\overline{#1}}
\newcommand{\vc}[1]{{\bf #1}}
\begin{document}

\title{Limit of a consistent approximation to the complete compressible Euler System}

\author{Nilasis Chaudhuri
}
\maketitle

\centerline{Technische Universit{\"a}t, Berlin}

\centerline{Institute f{\"u}r Mathematik, Stra\ss e des 17. Juni 136, D -- 10623 Berlin, Germany.}

\begin{abstract}
	The goal of the present paper is to prove that if a weak limit of a consistent approximation scheme of compressible complete Euler system in the full space $ \R^d,\; d=2,3 $ is a weak solution of the system then eventually the approximate solutions converge strongly in suitable norms locally under a minimal assumption on the initial data of the approximate solutions. The class of consistent approximate solutions is quite general including the vanishing viscosity and heat conductivity limit. In particular,	
	they may not satisfy the \textit{minimal principle for entropy}. 
\end{abstract}

{\bf Keywords:} Complete compressible Euler system, convergence, approximate solutions, defect measure. \\

{\bf AMS classification:} Primary: 76U10; Secondary: 35D30;
\section{Introduction}
We consider the complete  Euler system in the
physical space $ \R^d$ with $d=2,3$, where the word \emph{complete} means that the system follows the fundamental laws of thermodynamics. The complete Euler system describes the time evolution of the density $ \varrho=\varrho(t,x) $, the momentum $ \vectorm=\vectorm(t,x) $ and the energy $ \mathrm{e}=\mathrm{e}(t,x)$ of a compressible inviscid fluid
in the space time cylinder $ Q_{T}=(0,T)\times \R^d $:
\begin{itemize}
	\item \textbf{Conservation of Mass: }
	\begin{align}
	\partial_t \varrho + \text{div}_x \vectorm&=0. \label{cee:cont}
	\end{align}
	\item \textbf{Conservation of Momentum:}
	\begin{align}
	\partial_t \mathbf{m} + \Div \bigg(\frac{ \mathbf{m} \otimes \mathbf{m}}{\varrho}\bigg)+\nabla_x p&=0. \label{cee:mom}
	\end{align}	
	\item  \textbf{Conservation of energy:}
	\begin{align}
	\partial_{t} \mathrm{e} + \Div\big[(\mathrm{e}+p)\frac{\vectorm}{\varrho} \big]&=0. \label{cee:toen}
	\end{align}
	\end{itemize}
	Here, $ p $ is the pressure related to $ \varrho, \vectorm, \mathrm{e} $ through some suitable equation of {state}.
	\begin{Rem}
		The total energy $ \mathrm{e} $ of the fluid
		$$ \mathrm{e}= \frac{1}{2} \frac{\vert \vectorm \vert^2}{\varrho} + \varrho e,$$
		consists of the kinetic energy $ \frac{1}{2} \frac{\vert \vectorm \vert^2}{\varrho} $ and {the} internal energy $ \varrho e $. 
	\end{Rem}
\begin{itemize}
	\item \textbf{Thermal equation of state:} We introduce the absolute temperature $ \vartheta $. The equation of state is given by Boyle-Mariotte law, i.e. 
	\begin{align}\label{cee:eos}
	e = c_v \vartheta,\; c_v=\frac{1}{\gamma-1},\text{ where } \gamma>1 \text{ is the adiabatic constant.}
	\end{align}
	 The relation between pressure $ p $ and absolute temperature $ \vartheta $ reads as
	 \begin{align*}
	 p=\varrho \vartheta.
	 \end{align*}
\end{itemize}
\begin{Rem}
	As a simple consequence of the previous discussion we have \[ (\gamma-1)\varrho e=p.  \]
\end{Rem}
The second law of thermodynamics is enforced through the entropy balance equation.	
\begin{itemize}	
	\item \textbf{Entropy equation:}
	\begin{align} \label{ent}
	\partial_{t}(\varrho s)+ {\Div(s {\bf m})}=0,
	\end{align}   
	where the entropy is $ s $.
	For smooth solutions, the entropy equations \eqref{ent} can be derived directly from the existing field equations.
	The entropy in terms of the standard variables takes the form:
	\begin{align*}
	s(\varrho,\vartheta)= \log(\vartheta^{c_v})-\log(\varrho).
	\end{align*}
\end{itemize}

\begin{Rem}
	{Now with the introduction of {the} total entropy $S$ by $S=\varrho s$ we rephrase \eqref{ent} as}
	\begin{align*}
	\partial_{t} S + \Div \bigg(S\frac{\vectorm}{\varrho} \bigg)=0.
	\end{align*} 
\end{Rem}
The total entropy helps us to rewrite the pressure $p$ and $ e $ in terms of $ \varrho $ and $ S $ as
\begin{align*}
p=p(\varrho,S)=\varrho^\gamma \exp \bigg(\frac{S}{c_v \varrho}\bigg),\; e=e(\varrho,S)=\frac{1}{\gamma-1}\varrho^{\gamma-1}\exp \bigg(\frac{S}{c_v \varrho}\bigg).
\end{align*}
The advantage of {the} above way of writing is that $ (\varrho,S)\mapsto \varrho^\gamma \exp \bigg(\frac{S}{c_v \varrho}\bigg) $ is a strictly convex function in the domain of positivity, meaning at points, where it is finite and positive. A detailed proof can be found in Breit, Feireisl and Hofmanov\'a \cite{BeFH2019}. Let us complete the formulation of the complete Euler system by imposing the initial and far-field conditions:
\begin{itemize}
	\item \textbf{Initial data:} The initial state of the fluid is given through the conditions
	\begin{align}\label{initial condition}
	{\varrho(0,\cdot)=\varrho_{ 0},\; \vectorm(0,\cdot)= \vectorm_{0},\; S(0,\cdot)=S_0}.
	\end{align}
	\item \textbf{Far field condition:} We introduce the \emph{far field condition} as,
	\begin{align}\label{far_field}
	\varrho \rightarrow {\varrho}_\infty, \; \vectorm \rightarrow \vectorm_\infty,\; S \rightarrow S_\infty \text{ as } \vert x \vert \rightarrow \infty,
	\end{align}
	with $ \varrho_\infty>0,\ \vectorm_\infty \in \R^d$ and $ S_\infty \in \R $.
\end{itemize}\par

	There are many results concerning the mathematical theory of the complete Euler system. It is known that the initial value problem is well posed \emph{locally} in time in the class of smooth solutions, see e.g. the monograph by Majda\cite{M1984} or the recent monograph by {Benzoni--Gavage and Serre} \cite{BS2007}. In Smoller\cite{S1983}, it has been observed that a smooth solution develops singularity {in a finite time}. Thus it is adequate to consider a more general class of {weak (distributional) solutions to study the global in time behavior}. {However, uniqueness may be lost in a larger class of solutions.}\par 

Since our interest is {in} weak or dissipative solutions of the system, we {relax the entropy balance to inequality},
\begin{align}\label{cee:ent_in}
\partial_{t} S + \Div \bigg(S\frac{\vectorm}{\varrho} \bigg)\geq 0,
\end{align}
that is a physically relevant admissibility criteria for weak {solutions}. The adaptation of the method of convex integration in the context of incompressible fluids by De Lellis and Sz\'ekelyhidi \cite{DS2010} leads to ill-posedness of several problems in fluid mechanics also in the class of compressible barotropic fluids, see Chiodaroli and Kreml \cite{CK2014}, Chiodaroli, De Lellis and Kreml \cite{CDK2015} and Chiodaroli et al. \cite{CKMS2018}. The results by Chiodaroli, Feireisl and Kreml\cite{CFK2015} indicate that initial-boundary value problem {for the complete Euler system} admits infinitely many weak solutions on a given time interval $ (0,T) $ for a large class of initial data. In \cite{FKKM2020}, Feireisl et al. show that complete Euler system is ill-posed and these solutions satisfy the entropy inequality \eqref{cee:ent_in}. Chiodaroli, Feireisl and Flandoli in \cite{CFF2019} obtain the similar result for the complete Euler system driven by multiplicative white noise.{ Most of these results, based on the application of the method of convex integration, are non--constructive and use the fact that the constraints imposed by the Euler system on the class of weak solutions allow for oscillations. 
It is therefore of interest to see if solutions of the Euler system can be obtained as a \emph{weak} limit of a suitable approximate sequence. It is our goal to show that it is in fact \emph{not} the case, at least in the geometry of the full space $R^d$.}	\par
In the particular case of constant entropy, the complete Euler system reduces to its isentropic 
(or in a more general setting barotropic) version, where the pressure depends solely on the density.
Compressible barotropic Euler system is expected to describe the vanishing viscosity limit of the compressible barotropic Navier-Stokes system. If compressible barotropic Euler system admits a smooth solution, the unconditional convergence of vanishing viscosity limit has been established by Sueur\cite{Su2014}. Very recently Basari\'c \cite{Bd2019} identified the vanishing viscosity limit of the Navier-Stokes system with a measure valued solution of the barotropic Euler system for the unbounded domains. In \cite{FH2019}, Feireisl and Hofmanov\'a have established that in {the whole space} the vanishing viscosity limit of the barotropic system either converges strongly or its weak limit is not a weak solution for the corresponding barotropic Euler system. \par 
In this article we are interested in the complete Euler system. Feireisl in \cite{F2016} {showed that 
vanishing viscosity limit of the Navier-Stokes-Fourier system in the class of general weak solutions yields the complete Euler system}, provided the later admits smooth solution in bounded domain. Wang and Zhu \cite{WZ2018} establish a similar result in bounded domain with no-slip boundary condition. \par 
Approximate solutions can be viewed as some numerical approximation of {the complete} Euler system. Here we consider a more general class of approximate solutions, namely \emph{consistent approximate solutions}, drawing inspiration from Diperna and Majda \cite{DM1988}. Another example of such approximate problem may be derived from two models, \textbf{Navier-Stokes-Fourier system} and \textbf{Brenner's Model}. A discussion about these models have been presented in B\v rezina and Feireisl \cite{BF2018a}.   \par 
{The consistent approximations typically generate the so--called measure--valued solutions.} For the complete Euler system existence of measure valued solutions has been proved by B\v rezina and Feireisl \cite{BF2018b}, \cite{BF2018a} with the help of Young measures. Later in \cite{BeFH2019}, Breit, Feireisl and Hofmanov\'a  define dissipative solutions for the same system, by modifying the measure-valued solutions suitably. \par

Our main goal is to show that in $ \R^d  $ with $ d=2,3 $, if {approximate solutions} converge weakly to a weak solution of complete Euler system then the convergence will be point-wise almost everywhere. In certain cases we can further establish that the convergence is strong too. Some approximate solutions obtained from the Brenner's model satisfy the \textit{minimal principle for entropy} i.e. if the initial entropy $ s_n(0,\cdot )\geq s_o $ in $ \R^d $ for some constant $ s_0 $, then $$ s_n(t,x)\geq s_0 $$ for a.e.  $ (t,x) \in (0,T)\times \R^d $. Meanwhile this principle is unavailable for approximate solutions obtained from Navier-Stokes-Fourier system. In this paper we consider both type of approximate solutions. As we shall see, the lack of the entropy minimum principle will considerably weaken the available uniform bounds on the approximate sequence. Still we are able to establish strong 
a.e. convergence.
Another important {feature} of our result is that we only assume the {initial energy} is bounded  and the initial data for density converges weakly. {Indeed Feireisl and Hofmanov\'a \cite{FH2019} observed} that if the initial energy converges strongly then similar result can be obtained.\par 
Our plan for the paper is:
	\begin{enumerate}
		\item In Section 2, we recall the definition of weak solutions of the complete Euler system.
		\item In Section 3, we state the approximate problems and main theorems.
		\item In Section 4, few important results have been stated and proved.
		\item In Section 5, we provide the proof of the theorem when approximate solutions satisfy entropy inequality only.
		\item In Section 6, we deal with the renormalized entropy inequality and prove the desired result.
	\end{enumerate}
\section{Preliminaries}
We introduce few standard notations. 
\subsection{Notation}
The space $ C_0(\R^d) $ {is} the closure
under the supremum norm of compactly supported, continuous functions on $ \R^d $ ,
that is the set of continuous functions on $ \R^d $ vanishing at infinity. By $ \mathcal{M}(\R^d) $ we denote the dual space of $ C_0(\R^d) $ consisting of signed Radon measures with finite mass equipped with the dual norm of total variation.\par
The symbol $\mathcal{M}^{+}(\R^d)$ denotes the cone of non-negative Radon measures on $\R^d$ and $\mathcal{P}(\R^d)$ indicates the space of probability measures, i.e. for $\nu \in \mathcal{P}(\R^d)\subset {\mathcal{M}^+} (\R^d)$ we have {
$\nu[\R^d]=1$}. The symbol $\mathcal{M}(\R^d;\R^d)$ means the space of vector valued  finite signed Radon measures and $ \mathcal{M}^{+}(\R^d;\R^{d\times d}_{\text{sym}})  $ denotes the space of symmetric positive semidefinite matrix valued finite signed Radon measures, {meaning $\nu : (\xi \otimes \xi) \in \mathcal{M}^+(\R^d)$ for any $\xi \in \R^d$.}
\par
For $ T>0 $, we denote the space of essentially bounded weak(*) measurable functions from $ (0,T) $ to $ \mathcal{M}(\R^d)  $ by $ L^{\infty}_{\text{weak-(*)}}(0,T;\mathcal{M}(\R^d)) $. Since $ C_0(\R^d) $ is separable Banach space, we have $L^{\infty}_{\text{weak-(*)}}(0,T;\mathcal{M}(\R^d))$ is the dual of $ L^1(0,T; C_0(\R^d)) $. We also observe that $L^{\infty}_{\text{weak-(*)}}(0,T;L^2+\mathcal{M}(\R^d))$ is the dual of $ L^1(0,T; L^2 \cap C_0(\R^d))$. \par
We have introduced the total energy $ \mathrm{e} $ in Section 1. 
{For problems on the full space $ \R^d $ with far field conditions, it is convenient to 
consider a suitable form of relative energy.}
\begin{itemize}
	\item We {denote},  \[\mathrm{e}_{\text{kin}}=\frac{1}{2} \frac{\vert \vectorm \vert^2}{\varrho}\; \text{ and }  \mathrm{e}_{\text{int}}= \frac{1}{\gamma-1}\varrho^{\gamma}\exp \bigg(\frac{S}{c_v \varrho}\bigg)\] and 
	\[ \mathrm{e}(\varrho,\vectorm,S)=\mathrm{e}_{\text{int}}(\varrho,S)+\mathrm{e}_{\text{kin}}(\varrho,\vectorm).\]
	\item Let $ (\varrho_\infty,\vectorm_{\infty}, S_\infty) \in \mathbb{R}\times \mathbb{R}^d \times \mathbb{R} $ such that $ \varrho_\infty>0$.	We define the relative energy with respect to $ (\varrho_\infty, \vectorm_\infty, S_\infty) $as, \[ \mathrm{e}(\varrho,\vectorm,S|\varrho_\infty,\vectorm_\infty,S_\infty)=  \mathrm{e}_{\text{int}}(\varrho,S|\varrho_{\infty},S_\infty)+\mathrm{e}_{\text{kin}}(\varrho,\vectorm| \varrho_\infty,\vectorm_{\infty}), \]
	with 
	\begin{align*}
	 \mathrm{e}_{\text{int}}(\varrho,S|\varrho_{\infty},S_\infty)=  \mathrm{e}_{\text{int}}(\varrho,S)&-\frac{\partial \mathrm{e}_{\text{int}}}{\partial \varrho}(\varrho_\infty, S_\infty) (\varrho-\varrho_\infty)\\ &-\frac{\partial \mathrm{e}_{\text{int}}}{\partial S}(\varrho_\infty, S_\infty) (S-S_\infty) -\mathrm{e}_{\text{int}} (\varrho_\infty,S_\infty)
	\end{align*}
	 and
	\begin{align*}
	 \mathrm{e}_{\text{kin}}(\varrho,\vectorm|\varrho_\infty,\vectorm_{\infty})=&\mathrm{e}_{\text{kin}}(\varrho,\vectorm)-\frac{\partial \mathrm{e}_{\text{kin}}}{\partial \varrho}(\varrho_\infty, \vectorm_\infty) (\varrho-\varrho_\infty)\\ &-\frac{\partial \mathrm{e}_{\text{kin}}}{\partial \vectorm}(\varrho_\infty, \vectorm_\infty) \cdot (\vectorm-\vectorm_\infty) -\mathrm{e}_{\text{kin}} (\varrho_\infty,\vectorm_\infty).
	\end{align*}
	{Introducing the velocity fields $\vectoru$, $\vectoru_\infty$ as $ \vectorm= \varrho \vectoru $  and $ \vectorm_\infty= \varrho_\infty \vectoru_\infty  $, respectively} we observe
	\begin{align*}
	\mathrm{e}_{\text{kin}}(\varrho,\vectoru|\varrho_\infty,\vectoru_{\infty})=\frac{1}{2}\varrho \vert  \vectoru-\vectoru_\infty \vert^2.
	\end{align*}
	\item  In a more precise notation we write
	\begin{align*}
	&\mathrm{e}(\varrho,\vectorm,S|\varrho_\infty,\vectorm_\infty,S_\infty)\\
	& =\mathrm{e}(\varrho,\vectorm,S)-\partial \mathrm{e}(\varrho_\infty,\vectorm_\infty,S_\infty) \cdot [(\varrho,\vectorm,S)-(\varrho_\infty,\vectorm_\infty,S_\infty)]\\
	&\quad  -\mathrm{e}(\varrho_\infty,\vectorm_\infty,S_\infty).
	\end{align*}
\end{itemize}
\par 
We {introduce} the following \textbf{energy extension} in $ \R^{d+2} $ :
\begin{align}\label{energy extension}
\begin{split}
(\varrho,\vectorm, S)\mapsto \mathrm{e}(\varrho,\vectorm,S)\equiv  \begin{cases} 
&\frac{1}{2}\frac{\vert \vectorm \vert^2}{\varrho}+ c_v \varrho^{\gamma} \exp{\bigg(\frac{S}{c_v \varrho}\bigg)},   \text{ if } \varrho > 0,\\
&0,  \text{ if } \varrho = \vectorm  =0,\; S \leq  0 ,\\
&\infty,  \text{ otherwise }
\end{cases}
\end{split}
\end{align}

The above function is a convex lower semi-continuous on $\R^{d+2}$ and strictly convex on its domain of positivity.
\par  
Throughout our discussion we use $ C $ as a positive generic constant that is independent of $ n $ unless specified.  
\subsection{Definition of the weak solution for complete Euler system}
\begin{Def}
	Let $ (\varrho_\infty,\vectorm_{\infty}, S_\infty) \in \mathbb{R}\times \mathbb{R}^d \times \mathbb{R} $ such that $ \varrho_\infty>0$. The triplet $ (\varrho,\vectorm,S) $ is called a \textit{weak solution} of the complete Euler system with initial data $ (\varrho_{ 0},\vectorm_{0},S_0) $,
	if the following system of identities is satisfied:
	\begin{itemize}
		\item \textbf{Measurability:} The variables $ \varrho=\varrho(t,x),\; \vectorm= \vectorm(t,x)\; S=S(t,x) $ are measurable function in $ (0,T)\times \R^d $, \ {$\varrho \geq 0$,}
			\item \textbf{Continuity equation:}
		\begin{align}
		\begin{split}
		\int_0^T \int_{\R^d} \big[\varrho \partial_{t} \phi + \vectorm \cdot \nabla_{x} \phi \big] \dx \dt =-\int_{ \R^d} \varrho_{ 0} \phi(0,\cdot)\dx,
		\end{split}
		\end{align}
		for any $ \phi \in C_c^1([0,T)\times \R^d) $.
		\item \textbf{Momentum equation:}
		\begin{align}
		\begin{split} 
		&\int_0^T \int_{\R^d} \bigg[\vectorm \cdot \partial_{t} \vectorphi + {\mathbb{1}_{\{ \varrho > 0\}} }\frac{\vectorm \otimes \vectorm}{\varrho} : \nabla_{x} \vectorphi + {\mathbb{1}_{\{ \varrho > 0\}} } p(\varrho,S) \Div \vectorphi \bigg] \dx \dt \\
		&= -\int_{ \R^d} \vectorm_{0}\cdot \vectorphi(0.\cdot) \dx,
		\end{split}
		\end{align}
		for any $ \vectorphi \in C_c^1([0,T)\times \R^d;\R^d) $.
		\item \textbf{Relative energy inequality:}
		{The satisfaction of the far field conditions is enforced through the relative energy inequality} in the following form : 
		\begin{align}
		\begin{split}
			\bigg[ \int_{\R^d} \mathrm{e}(\varrho,\vectorm,S|\varrho_\infty,\vectorm_\infty,S_\infty)\; (\tau,\cdot) \dx \bigg]^{\tau=t}_{\tau=0}\leq 0,
		\end{split}
		\end{align}
		for a.e. $ t\in (0,T) $.
		\item \textbf{Entropy inequality:}
		\begin{align}
		\int_0^T \int_{\R^d} \bigg[S\;  \partial_{t} \phi + \mathbb{1}_{\{\varrho>0\}}\frac{S }{\varrho} \vectorm  \cdot \nabla_{x} \phi \bigg] \dx \dt \leq 0,
		\end{align}
		for any $ \phi \in C_c^1((0,T)\times \R^d) $ with $\phi \geq 0$.
	\end{itemize}

\end{Def} 
Note that the above definition of admissible weak solution is considerably weaker than the standard weak formulation that contains also the energy balance \eqref{cee:toen}. The present setting is more in the spirit of more general measure--valued solutions introduced in B\v rezina, Feireisl \cite{BF2018b}. As a matter of fact, considering weaker concept of generalized solutions makes our results stronger as the standard weak solutions are covered.
\section{Approximate problem and Main theorems}
As we have mentioned in the introduction our main results are related to the approximate problems of the complete Euler system. 
Let $ (\varrho_\infty,\vectorm_{\infty}, S_\infty) \in \mathbb{R}\times \mathbb{R}^d \times \mathbb{R} $ such that $ \varrho_\infty>0$.
 \subsection{Approximate problems of complete Euler system }
We say $ (\varrho_{ n},\vectorm_{ n},S_n=\varrho_{ n}s_n) $ is a family of admissible consistent approximate solutions for the complete Euler system in $ (0,T)\times\R^d $ with initial data $ (\varrho_{ 0,n},\vectorm_{0,n}, S_{0,n}=\varrho_{ 0,n} s_{0,n}) $ if the following holds:
\begin{itemize}
	\item {$\varrho_{ n}\geq0$} and any $ \phi \in C_c^1([0,T)\times \R^d) $ we have,
	\begin{align}\label{app_cont}
	-\int_{\R^d} \varrho_{ 0,n} \phi(0,\cdot) \dx=\int_0^T \int_{\R^d} \big[\varrho_n \partial_{t} \phi + \vectorm_n \cdot \nabla_{x} \phi \big] \dx \dt + \int_0^T \mathfrak{E}_{1,n}[\phi] \dt ;
	\end{align}
	\item For any $ \vectorphi \in C_c^1([0,T)\times \R^d;\R^d) $, we {have}
	\begin{align}\label{app_mom}
	\begin{split}
	&\hspace{-2mm}-\int_{\R^d}  \vectorm_{0,n} \vectorphi(0,\cdot) \dx\\
	&\hspace{-2mm}=\int_0^T  \int_{\R^d} \bigg[ \vectorm_n \cdot \partial_{t} \vectorphi + \mathbb{1}_{\{\varrho_{ n}>0\}}\frac{\vectorm_n \otimes \vectorm_n}{\varrho_n}\colon \nabla_{x} \vectorphi + {\mathbb{1}_{\{\varrho_{ n}>0\}}} p(\varrho_n,S_n) \Div \vectorphi \bigg] \hspace{-2mm}\dx \hspace{-1mm}\dt\\
	&\quad +\int_0^T \mathfrak{E}_{2,n}[\vectorphi] \dt;
	\end{split}
	\end{align}
	\item \text{ For a.e. }$ 0\leq \tau \leq T$, we have 
	\begin{align}\label{app_engy}
	\begin{split}
	&\int_{ \R^d} \mathrm{e}(\varrho_{n},\vectorm_{n},S_n|\varrho_\infty,\vectorm_\infty,S_\infty)(\tau) \dx\\
	& \leq\int_{ \R^d} \mathrm{e}(\varrho_{0,n},\vectorm_{0,n},S_{0,n}|\varrho_\infty,\vectorm_\infty,S_\infty) \dx+\mathfrak{E}_{3,n};
	\end{split}
	\end{align}
	\item  	For any $ \psi \in C_c^1([0,T)\times \R^d) $ with $\psi \geq 0$, we have
	\begin{align}\label{app_entropy}
	\begin{split}
	&\int_0^T \int_{\R^d} \bigg[S_n\;  \partial_{t} \psi + \mathbb{1}_{\{\varrho_{ n}>0\}} 
	{\frac{S_n}{\varrho_{ n}}\vectorm_n  \cdot \nabla_{x} \psi} \bigg] \dx \dt \\
	&\leq  -\int_{\R^d} \varrho_{ 0,n} s_{0,n} \psi(0,\cdot) \dx+  \int_0^T\mathfrak{E}_{4,n}[\psi]\dt ;
	\end{split}
	\end{align}
	\item 
	Here, the terms $\mathfrak{E}_{1,n}[\phi],\; \mathfrak{E}_{2,n}[\vectorphi],\; \mathfrak{E}_{3,n} \text{ and }\mathfrak{E}_{4,n}[\psi]$ represent \emph{consistency error}, i.e.,
	\[ \mathfrak{E}_{3,n},\; \mathfrak{E}_{4,n}[\psi] \geq 0 \]
	and 
	\begin{align}\label{app_const_er}
	\mathfrak{E}_{1,n}[\phi]\rightarrow 0,\; \mathfrak{E}_{2,n}[\vectorphi]\rightarrow 0,\;\mathfrak{E}_{3,n}\rightarrow 0 \text{ and }\mathfrak{E}_{4,n}[\psi]\rightarrow 0 \text{ as } n\rightarrow \infty ,
	\end{align}
	for fixed {$\phi, \; \vectorphi\text{ and } \psi(\geq 0)$}.
\end{itemize}

	Instead of \eqref{app_entropy}, a renormalized version of entropy inequality for approximate problem can be considered:
	\begin{align}\label{app_entropy_renorm}
	\begin{split}
	\int_0^T \int_{\R^d} \bigg[\varrho_{ n}\chi(s_n)\;  \partial_{t} \psi + {\chi(s_n) }\vectorm_n  \cdot \nabla_{x} \psi \bigg] \dx \dt  \leq -\int_{\R^d} \varrho_{ 0,n} \chi(s_{0,n}) \psi(0,\cdot) \dx,
	\end{split}
	\end{align}
	for any $ \psi \in C_c^1([0,T)\times \R^d) $ with $\psi \geq 0$ and any $\chi$ and $ \bar{\chi} \in \R^{+}$ with
	\[\chi:\R \rightarrow \R \text{ a non--decreasing concave function, }\chi(s)\leq \bar{\chi} \text{ for all }s\in \R. \]
	\begin{Rem}
		Clearly one can recover the inequality \eqref{app_entropy} without error from the \eqref{app_entropy_renorm}. Further, consideration of renormalized entropy inequality \eqref{app_entropy_renorm} leads us to conclude that the entropy is transported along streamlines, see B\v rezina and Feireisl \cite{BF2018b}. We rephrase it by saying the \textit{minimal principle for entropy } holds, i.e.
		\begin{align}\label{ent_min_app2}
		\text{for } s_0\in \R,  \text{ if }s_n(0,\cdot) \geq s_0 \text{ then }	s_n(\tau,\cdot)\geq s_0 \text{ in } \R^d \text{ for a.e. } 0\leq \tau\leq T.
		\end{align} 
	\end{Rem}
	
\begin{Rem}
	It has been shown in \cite{BF2018b} that approximate solutions comes from Navier-Stokes-Fourier system may not satisfy renormalized version of entropy balance \eqref{app_entropy_renorm}, they only satisfy \eqref{app_entropy}. While solutions coming from Brenner's model satisfy \eqref{app_entropy_renorm}.
\end{Rem}
From now on we refer as follows:
\begin{itemize}
	\item \textbf{First approximation problem  }: Approximate solutions satisfy \eqref{app_cont}-\eqref{app_const_er};
	\item  \textbf{Second approximation problem }: Approximate solutions satisfy \eqref{app_cont}-\eqref{app_engy}  \eqref{app_const_er}, and \eqref{app_entropy_renorm}.
\end{itemize}
\subsection{Hypothesis on the initial data}
We assume that initial density is non-negative and initial relative energy is uniformly bounded, i.e.
\begin{align}\label{app_ic_1}
\begin{split}
&{\varrho_{ 0,n} \geq 0 \ \mbox{and}} {\int_{ \R^d} \mathrm{e}(\varrho_{0,n},\vectorm_{0,n},S_{0,n}|\varrho_\infty,\vectorm_\infty,S_\infty) \dx \leq E_0}\\
\end{split}
\end{align}
with $ E_0$ is independent of $ n $.
{As the relative energy is strictly convex in its domain, we obtain}  
\begin{align}\label{app_ic1_1.2}
&\varrho_{ 0,n}-\varrho_\infty \in L^2+L^1(\R^d) {\text{ and } \varrho_{ 0,n} \rightharpoonup \varrho_0 \text{ in } 
\mathcal{M}^+_{\text{loc}}(\R^d )\text{ as  } n \rightarrow \infty}
\end{align}
{passing to a subsequence as the case may be.}\\
This is enough for the \textbf{first approximation problem} but the \textbf{second approximation problem} needs some additional assumption that the initial entropy is bounded below i.e. for some $ s_0\in \R $ we have 
\begin{align}\label{app_ic_2}
{s_{0,n}} \geq s_0 \text{ in } \R^d, \text{ for all } n\in \mathbb{N} .
\end{align}  
\subsection{Main theorem}
Before stating our main results, we observe that hypothesis \eqref{app_ic_1} shared by both approximate problems yields uniform bounds
\[
\varrho_n - \varrho_\infty, \ S_n - S_\infty \in L^\infty(0,T; L^1 + L^2(\R^d)), \ 
{\bf m}_n - {\bf m}_\infty \in L^\infty(0,T; L^1 + L^2(\R^d)).
\]
{In particular, passing to a subsequence if necessary, we may assume that the sequence 
$(\varrho_n, {\bf m}_n, S_n)$ generates a Young measure $\{ \mathcal{V}_{t,x} \}_{t \in (0,T) \times \R^d}$, as described in Ball \cite{Ball1989}.
We denote}
\[
(\varrho(t,x),\vectorm(t,x),S(t,x))=(\langle \Nu_{t,x}; \tilde{\varrho}\rangle, \langle \Nu_{t,x};\tilde{\vectorm} \rangle,\langle \Nu_{t,x}; \tilde{S} \rangle).
\]
We also observe that
\[
(\varrho, {\bf m}, S) \in L^{\infty}_{\text{weak(*)}}(0,T ; L^1_{{\rm loc}}(\R^d)).
\]
As we have noticed that the fundamental difference of two approximate problem is the minimal condition for entropy.  Here we will state the main theorems.
\begin{Th}[\textbf{First approximation problem}]\label{th1}
	Let $d=2,3$ and $ \gamma>1 $. Let $ (\varrho_n, \vectorm_n, S_n= \varrho_{ n} s_n ) $ be a sequence of \emph{admissible solutions} of the consistent approximation with uniformly bounded initial energy as in \eqref{app_ic_1} and {the initial densities satisfying \eqref{app_ic1_1.2}. 
Suppose that the barycenter $(\varrho, {\bf m}, S)$ of the Young measure generated by the sequence
$(\varrho_n, {\bf m}_n, S_n)$ is an admissible weak solution of the complete Euler system satisfying} 
	\begin{align}\label{assump_ent_app1}
	\varrho(0, x) = \varrho_0(x),\ 
	S(t,x)=0 \text{ whenever } \varrho(t,x)=0 \text{ for  a.e. } (t,x) \in(0,T)\times \R^d.
	\end{align}
	Then 
\begin{align*}
	&\varrho_{ n} \rightarrow \varrho \text{ in } L^{q}(0,T;L^1_{\rm loc}(\R^d)),\;\vectorm_n  \rightarrow \vectorm \text{ in }L^{q}(0,T;L^{1}_{\rm loc}(\R^d;\R^d))	,\\
	& S_n\rightarrow S\text{ for a.e. } (t,x)\in (0,T)\times  \R^d ,
	\end{align*}
	for any $1 \leq q < \infty$.
Moreover, 
passing to a subsequence as the case may be, we have
	\begin{align}
	\begin{split}
	&\varrho_{ n}\rightarrow \varrho ,\;  \vectorm_n \rightarrow \vectorm \text{ and}\; S_n \rightarrow S \text{ for a.e.} (t,x)\in (0,T)\times \R^d .\\
	\end{split}
	\end{align}

\end{Th}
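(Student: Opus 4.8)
The plan is to exploit the convexity of the energy extension $\mathrm{e}$ together with the fact that the barycenter $(\varrho,\vectorm,S)$ is itself an admissible weak solution, so that it \emph{saturates} the energy inequality in the limit. First I would pass to the limit in the approximate relative energy inequality \eqref{app_engy}. By weak lower semicontinuity of the convex functional $(\varrho,\vectorm,S)\mapsto \mathrm{e}(\varrho,\vectorm,S|\varrho_\infty,\vectorm_\infty,S_\infty)$ applied to the Young measure, for a.e.\ $\tau$ one gets
\[
\int_{\R^d} \big\langle \Nutx; \mathrm{e}(\tilde\varrho,\tilde\vectorm,\tilde S|\varrho_\infty,\vectorm_\infty,S_\infty)\big\rangle(\tau)\dx
\ \leq\ \liminf_{n\to\infty}\int_{\R^d}\mathrm{e}(\varrho_n,\vectorm_n,S_n|\cdots)(\tau)\dx
\ \leq\ \limsup_{n}\Big[\int \mathrm{e}(\varrho_{0,n},\cdots)+\mathfrak E_{3,n}\Big]\ \leq\ E_0',
\]
and more importantly, using \eqref{app_ic1_1.2} and convexity on the initial layer, one controls the right-hand side by the energy of the limiting \emph{barycentric} initial data. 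On the other hand, since $(\varrho,\vectorm,S)$ is an admissible weak solution, the relative energy inequality in the Definition gives $\int_{\R^d}\mathrm{e}(\varrho,\vectorm,S|\cdots)(\tau)\dx \leq \int_{\R^d}\mathrm{e}(\varrho_0,\vectorm_0,S_0|\cdots)\dx$. The crux is to show these two chains of inequalities pinch together, forcing
\[
\int_{\R^d}\big\langle \Nutx;\mathrm{e}(\tilde\varrho,\tilde\vectorm,\tilde S|\cdots)\big\rangle\dx = \int_{\R^d}\mathrm{e}\big(\langle\Nutx;\tilde\varrho\rangle,\langle\Nutx;\tilde\vectorm\rangle,\langle\Nutx;\tilde S\rangle\,|\,\cdots\big)\dx
\]
for a.e.\ $\tau$; this requires reconciling the initial data — here assumption \eqref{app_ic1_1.2} (weak convergence of $\varrho_{0,n}$), the hypothesis $\varrho(0,x)=\varrho_0(x)$ in \eqref{assump_ent_app1}, and a careful argument that no energy is lost at $t=0$ despite only weak convergence of the initial densities.

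Second, I would invoke strict convexity of $\mathrm{e}$ on its domain of positivity. The equality of the averaged energy and the energy of the average, via Jensen's inequality for a strictly convex function, forces the Young measure $\Nutx$ to be a Dirac mass $\delta_{(\varrho(t,x),\vectorm(t,x),S(t,x))}$ at a.e.\ $(t,x)$ where $\varrho(t,x)>0$. On the (space-time) set where $\varrho(t,x)=0$, one uses that $\mathrm{e}_{\mathrm{kin}}=\tfrac12|\vectorm|^2/\varrho$ blowing up forces $\langle\Nutx;\tilde\vectorm\rangle$-concentration to vanish and, together with the structural hypothesis $S(t,x)=0$ whenever $\varrho(t,x)=0$ from \eqref{assump_ent_app1} and the $S\le 0$ branch in \eqref{energy extension}, that the measure again degenerates to the Dirac at $(0,0,0)$. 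Hence $\Nutx=\delta_{(\varrho,\vectorm,S)(t,x)}$ for a.e.\ $(t,x)\in(0,T)\times\R^d$.

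Third, from $\Nutx$ being Dirac I would deduce a.e.\ convergence: a bounded sequence generating a Dirac Young measure converges in measure on every set of finite measure, hence (passing to a subsequence) a.e.\ on $(0,T)\times\R^d$; this gives the final displayed conclusion. To upgrade to the stated $L^q(0,T;L^1_{\rm loc})$ convergence, I would combine the a.e.\ convergence with uniform integrability supplied by the uniform bound $\varrho_n-\varrho_\infty,\ \vectorm_n-\vectorm_\infty\in L^\infty(0,T;L^1+L^2)$ and the energy bound controlling $\varrho_n|\vectoru_n|^2$; Vitali's convergence theorem on bounded spatial subdomains, plus dominated convergence in the time variable, then yields strong convergence in $L^q(0,T;L^1_{\rm loc})$ for every $q<\infty$. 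The main obstacle I anticipate is the first step: the initial-time energy reconciliation, i.e.\ ruling out a defect in the energy balance at $t=0$ given only weak (not strong) convergence of $\varrho_{0,n}$ — this is precisely where the hypothesis \eqref{assump_ent_app1} that the barycenter attains $\varrho_0$ as its initial density must be used, likely via lower semicontinuity of the relative energy under weak convergence combined with the admissibility of the limit solution, and it is the delicate point distinguishing this result from the strong-initial-data case of Feireisl and Hofmanov\'a \cite{FH2019}.
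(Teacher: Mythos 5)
Your central step does not close, and it is precisely the step this theorem is designed to avoid. You propose to pinch the approximate energy inequality \eqref{app_engy} against the relative energy inequality satisfied by the limit weak solution so as to force $\int_{\R^d}\langle \Nu_{t,x};\mathrm{e}(\tilde\varrho,\tilde\vectorm,\tilde S|\varrho_\infty,\vectorm_\infty,S_\infty)\rangle\dx=\int_{\R^d}\mathrm{e}(\varrho,\vectorm,S|\varrho_\infty,\vectorm_\infty,S_\infty)\dx$. For that you would need an upper bound of the form $\limsup_n\int_{\R^d}\mathrm{e}(\varrho_{0,n},\vectorm_{0,n},S_{0,n}|\cdots)\dx\leq\int_{\R^d}\mathrm{e}(\varrho,\vectorm,S|\cdots)(\tau)\dx$, i.e.\ essentially convergence of the initial energies to the energy of the limit. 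But the hypotheses give only the uniform bound \eqref{app_ic_1} and weak convergence of the densities \eqref{app_ic1_1.2}; lower semicontinuity yields the inequality in the \emph{opposite} direction, and oscillations or concentrations in $\vectorm_{0,n}$, $S_{0,n}$ may carry initial energy that is lost in the weak limit, so the two chains of inequalities never meet. This is exactly the distinction the paper draws with the strong-initial-energy case of Feireisl--Hofmanov\'a: the present result is proved \emph{without} any energy reconciliation at $t=0$. The actual mechanism is different: one passes to the limit in the continuity and momentum equations with concentration/oscillation defects; since the barycenter solves the same equations weakly, the momentum defect $\mathfrak{C}_{\text{eng}}$, a positive semidefinite matrix-valued measure, satisfies $\int_{\R^d}\nabla_x\vectorphi:\mathrm{d}\mathfrak{C}_{\text{eng}}=0$ for all test fields and hence vanishes by Proposition \ref{Def_prop2} (a genuinely full-space Liouville-type property), and the trace comparison \eqref{comp_def} then annihilates the energy defect $\mathfrak{R}_{\text{eng}}$. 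Only after that do the sharp Jensen inequality and assumption \eqref{assump_ent_app1} give $\Nu_{t,x}=\delta_{(\varrho,\vectorm,S)(t,x)}$, which is the part of your outline that is in the right spirit.

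A second gap concerns the upgrade to $L^q(0,T;L^1_{\rm loc})$ convergence. You invoke Vitali with ``uniform integrability supplied by'' the $L^\infty(0,T;L^1+L^2)$ bounds, but an $L^1$ bound does not give equi-integrability, and without the entropy minimum principle there is no $\gamma$-growth estimate here. In the paper, local equi-integrability of $\{\varrho_n\}$ is a consequence of the vanishing of the density concentration defect $\mathfrak{C}_\varrho$: subtracting the limit continuity equation from the one with defects yields $\partial_t\mathfrak{C}_\varrho+\Div\mathfrak{C}_{\vectorm}=0$, and testing with $\psi\equiv 1$ (admissible by Lemma \ref{Def_lem2}) together with the hypothesis $\varrho(0,\cdot)=\varrho_0$ shows $\mathfrak{C}_\varrho(t)(\R^d)=0$ for a.e.\ $t$; equi-integrability of $\{\vectorm_n\}$ then follows from $\vectorm_n=\sqrt{\varrho_n}\,\vectorm_n/\sqrt{\varrho_n}$ and the kinetic energy bound. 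Thus the hypothesis $\varrho(0,x)=\varrho_0(x)$ in \eqref{assump_ent_app1} is used for this defect argument, not for the energy pinch at $t=0$ where you placed it.
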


\begin{Th}[\textbf{Second approximation problem}]\label{th2}
		Let $d=2,3$ and $ \gamma>1 $. Let  \\$ (\varrho_n, \vectorm_n, S_n= \varrho_{ n} s_n ) $ be a sequence of \emph{admissible solutions} of the consistent approximation with initial energy satisfying \eqref{app_ic_1} and the initial entropy satisfying \eqref{app_ic_2}. Suppose, 
	\begin{align}
	\begin{split}
	&\varrho_{ n}\rightarrow \varrho \text{ in } \mathcal{D}^{\prime}((0,T)\times\R^d),\;  \vectorm_n \rightarrow \vectorm \text{ in }\mathcal{D}^{\prime}((0,T)\times\R^d;\R^d),\; \\&S_n \rightarrow S \text{ in }\mathcal{D}^{\prime}((0,T)\times\R^d),\\
	\end{split}
	\end{align}	
	where $ (\varrho,\vectorm, S) $ is a weak solution of the complete Euler system. \\
	Then
	\[ \mathrm{e}(\varrho_{ n},\vectorm_{n},S_n|\varrho_\infty,\vectorm_\infty,S_\infty)\rightarrow \mathrm{e}(\varrho,\vectorm,S|\varrho_\infty,\vectorm_\infty,S_\infty) \text{ in } {L^{q}(0,T;L^1_{\rm loc}(\R^d))} \]
	as $ n\rightarrow \infty  $ for any $ 1\leq q <\infty $. {Moreover,}
	\begin{align*}
	&\varrho_{ n} \rightarrow \varrho \text{ in } L^{q}(0,T;L^\gamma_{\rm loc}(\R^d)), \vectorm_n  \rightarrow \vectorm \text{ in }L^{q}(0,T;L^{\frac{2\gamma}{\gamma+1}}_{\text{loc}}(\R^d;\R^d)) \\
	& S_n \rightarrow S \text{ in }L^{q}(0,T;L^\gamma_{\rm loc}(\R^d)),
	\end{align*}
for any $1 \leq q < \infty$.	
\end{Th}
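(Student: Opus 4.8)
The plan is a Young--measure reduction in which the decisive input is that the limit satisfies the \emph{momentum} balance without defect. From \eqref{app_engy}, \eqref{app_ic_1} and $\mathfrak{E}_{3,n}\to0$, the quantity $\int_{\R^d}\mathrm{e}(\varrho_n,\vectorm_n,S_n|\varrho_\infty,\vectorm_\infty,S_\infty)(\tau,\cdot)\dx$ is bounded uniformly in $n$ and $\tau$; coercivity of the relative energy gives $\varrho_n-\varrho_\infty,\vectorm_n-\vectorm_\infty,S_n-S_\infty\in L^\infty(0,T;L^1+L^2(\R^d))$ and boundedness of $\mathrm{e}_{\mathrm{kin}}(\varrho_n,\vectorm_n|\varrho_\infty,\vectorm_\infty)$, $\mathrm{e}_{\mathrm{int}}(\varrho_n,S_n|\varrho_\infty,S_\infty)$ in $L^\infty(0,T;L^1(\R^d))$. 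The minimum principle \eqref{ent_min_app2} — valid because of \eqref{app_ic_2} and \eqref{app_entropy_renorm} — gives $s_n\ge s_0$, whence $p(\varrho_n,S_n)\ge c\,\varrho_n^\gamma$ and $|S_n|^\gamma=|\varrho_n s_n|^\gamma\le C\,\varrho_n^\gamma\exp(s_n/c_v)=C'\,\mathrm{e}_{\mathrm{int}}(\varrho_n,S_n)$ (exponential beats power for $s\ge s_0$); thus $\varrho_n,S_n$ are bounded in $L^\infty(0,T;L^\gamma_{\mathrm{loc}})$ and, by $|\vectorm_n|\le\varrho_n^{1/2}\big(|\vectorm_n|^2/\varrho_n\big)^{1/2}$, $\vectorm_n$ in $L^\infty(0,T;L^{2\gamma/(\gamma+1)}_{\mathrm{loc}})$. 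Passing to a subsequence we generate a Young measure $\{\Nutx\}$ whose barycenter is, by assumption, the $\mathcal{D}'$ limit $(\varrho,\vectorm,S)$, an admissible weak solution.

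\textbf{Passing to the limit in the momentum equation.} Writing the weak-$*$ limits of the quadratic fluxes as a barycentric part plus a non-negative concentration defect,
\[
\mathbb{1}_{\{\varrho_n>0\}}\tfrac{\vectorm_n\otimes\vectorm_n}{\varrho_n}\rightharpoonup\big\langle\Nutx;\mathbb{1}_{\{\tilde\varrho>0\}}\tfrac{\tilde\vectorm\otimes\tilde\vectorm}{\tilde\varrho}\big\rangle\dx+\mathfrak{R}_{\mathrm{kin}},\qquad \mathbb{1}_{\{\varrho_n>0\}}p(\varrho_n,S_n)\rightharpoonup\big\langle\Nutx;p\big\rangle\dx+\mathfrak{R}_{p},
\]
I let $n\to\infty$ in \eqref{app_mom} tested against $\vectorphi\in C_c^1((0,T)\times\R^d;\R^d)$ (the initial term disappears, $\mathfrak{E}_{2,n}\to0$) and subtract the momentum identity for $(\varrho,\vectorm,S)$, so the non-decaying far-field fluxes cancel and there remains $\Div M=0$ in $\mathcal{D}'$ with
\[
M=\Big(\big\langle\Nutx;\mathbb{1}_{\{\tilde\varrho>0\}}\tfrac{\tilde\vectorm\otimes\tilde\vectorm}{\tilde\varrho}\big\rangle-\mathbb{1}_{\{\varrho>0\}}\tfrac{\vectorm\otimes\vectorm}{\varrho}\Big)\dx+\mathfrak{R}_{\mathrm{kin}}+\Big(\big\langle\Nutx;p\big\rangle-\mathbb{1}_{\{\varrho>0\}}p(\varrho,S)\Big)\mathbb{I}\dx+\mathfrak{R}_{p}\mathbb{I}.
\]
Convexity of $p$ and of $(\varrho,\vectorm)\mapsto\vectorm\otimes\vectorm/\varrho$ (in the sense of quadratic forms) makes the absolutely continuous parts, i.e.\ the Jensen gaps, positive semidefinite, and $\mathfrak{R}_{\mathrm{kin}},\mathfrak{R}_p\ge0$, so $M\succeq0$.

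\textbf{Vanishing of $M$ and rigidity of the Young measure.} Being affine invariant, the Jensen gaps are dominated by $\big\langle\Nutx;\mathrm{e}_{\mathrm{kin}}(\cdot|\varrho_\infty,\vectorm_\infty)\big\rangle$ and $\big\langle\Nutx;\mathrm{e}_{\mathrm{int}}(\cdot|\varrho_\infty,S_\infty)\big\rangle$, which are bounded by the absolutely continuous part of the weak-$*$ limit of $\mathrm{e}(\varrho_n,\vectorm_n,S_n|\varrho_\infty,\vectorm_\infty,S_\infty)\in L^\infty(0,T;L^1(\R^d))$; together with the bound $E_0$ on the total masses of $\mathfrak{R}_{\mathrm{kin}},\mathfrak{R}_p$, this shows that for a.e.\ $\tau$, $M(\tau,\cdot)$ is a positive semidefinite matrix-valued measure of finite total variation on $\R^d$ with $\Div M(\tau,\cdot)=0$. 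Testing against $\nabla_x\Psi_R$ with $\Psi_R(x)=\tfrac{1}{2}|x|^2\chi(x/R)$ and letting $R\to\infty$ (the corrections being $\lesssim\int_{R<|x|<2R}|M|\to0$) yields $\int_{\R^d}\trace M(\tau,\cdot)=0$, hence $M\equiv0$. Consequently $\mathfrak{R}_{\mathrm{kin}}=\mathfrak{R}_p=0$ and the Jensen gaps of $p$ and of $|\vectorm|^2/\varrho$ vanish $\Nutx$-a.e.: by strict convexity of $p$ on its domain of positivity $\Nutx$ is Dirac in $(\tilde\varrho,\tilde S)$ on $\{\varrho>0\}$, and strict convexity of $\vectorm\mapsto|\vectorm|^2/\varrho$ then fixes $\tilde\vectorm$; on $\{\varrho=0\}$ one has $\tilde\varrho=\tilde\vectorm=0$ and $\tilde S\le0$ $\Nutx$-a.s., while the minimum principle passed to the limit forces $S\ge\varrho s_0=0$, i.e.\ $\langle\Nutx;\tilde S\rangle=0$, so $\tilde S=0$ $\Nutx$-a.s. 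Hence $\Nutx=\delta_{(\varrho(\tau,x),\vectorm(\tau,x),S(\tau,x))}$ for a.e.\ $(\tau,x)$, giving $\varrho_n\to\varrho$, $\vectorm_n\to\vectorm$, $S_n\to S$ a.e.\ in $(0,T)\times\R^d$. Finally, $M=0$ means there is no concentration in $\mathrm{e}_{\mathrm{kin}}$, $\mathrm{e}_{\mathrm{int}}$, hence none in $\varrho_n^\gamma$, $|S_n|^\gamma$, $|\vectorm_n|^{2\gamma/(\gamma+1)}$ or $\mathrm{e}(\varrho_n,\vectorm_n,S_n|\varrho_\infty,\vectorm_\infty,S_\infty)$; equi-integrability on compacts together with the a.e.\ convergence, Vitali's theorem and dominated convergence in $\tau$ upgrade everything to the claimed convergences in $L^q(0,T;L^1_{\mathrm{loc}})$, $L^q(0,T;L^\gamma_{\mathrm{loc}})$ and $L^q(0,T;L^{2\gamma/(\gamma+1)}_{\mathrm{loc}})$ for all $1\le q<\infty$.

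\textbf{Main difficulty.} The delicate point is the vanishing of $M$ on the non-compact space: testing a divergence-free relation against the quadratic, non-compactly supported weight $|x|^2/2$ is legitimate only because $M$ is \emph{a priori} a finite measure, and this finiteness rests precisely on the coercivity of the \emph{relative} energy (not the total one) and on the far-field normalization, while the cutoff $\chi(x/R)$ must be handled with care to rule out any loss of mass at spatial infinity. A secondary subtlety is identifying $\Nutx$ on the vacuum set $\{\varrho=0\}$, done here via the minimum principle. Note that strong convergence of the initial data is never used: the momentum identity is tested away from $t=0$ and the initial configuration enters only through the bound $E_0$, which is exactly why the weakened hypothesis \eqref{app_ic1_1.2} suffices.
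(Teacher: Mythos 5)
Your proposal is correct and follows essentially the same route as the paper: the entropy minimum principle gives the refined coercivity of the relative energy, the limit momentum equation produces a positive semidefinite, finite, divergence-free defect measure which is annihilated by the quadratic-cutoff test-function argument (the paper's Proposition \ref{Def_prop2}), the trace comparison then kills the relative-energy defect, and strict convexity plus local equi-integrability yield a.e.\ and then strong local convergence. Your only deviations are cosmetic: you reprove the measure-rigidity step inline with $\tfrac{1}{2}|x|^2\chi(x/R)$, identify the Young measure on the vacuum set via the minimum principle where the paper cites Feireisl's Theorem 2.11, and upgrade the momentum convergence by Vitali rather than the paper's $\vectorm_n/\sqrt{\varrho_n}$ argument.
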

\section{{Essential} results}
As is well known, uniformly bounded sequence in $ L^1(\R^d) $ does not imply, in general, weak convergence of it. Using the fact that $L^1(\R^d)  $ is continuously embedded in space of Radon measures $ \mathcal{M} (\R^d)$ and identification of $ \mathcal{M}(\R^d) $ with the dual of $ C_0(\R^d) $ provides some weak(*) compactness. On the other hand by Chacon's biting limit theorem characterizes that the limit measure concentrate in some subsets of $ \R^d $ with small Lebesgue measure and other than this small sets the limit is a $ L^1  $ function.
\subsection{Concentration defect measure}
In this section we will establish few results.
Let $ \vectorU_n :\R^d \rightarrow \R^m  $ such that 
\begin{align*}
&\vectorU_n=\vectorV_n+ \mathbf{W}_n \text{ with }\Vert \vectorV_n \Vert_{L^2(\R^d;\R^m)} + \Vert \mathbf{W}_n \Vert_{L^1(\R^d;\R^m)} \leq C, 
\end{align*}
$ C $ is independent of $ n $. 
 Fundamental theorem of Young measure as in \cite{Ball1989} ensures the existence of a Young measure $ \nu \in L^{\infty}_{\text{weak-(*)}}(\R^d;\mathcal{M}(\R^d;\R^m))$, generated by $ \{\vectorU_n\}_{n\in \mathbb{N}} $. Further we have $ y \mapsto \left \langle  \nu_{y}, \tilde{\vectorU}\right \rangle \; \text{in } L^1_{\text{loc}}(\R^d) . $ \par
It is well known fact that $ L^{1}(\R^d;\R^m)\subset \mathcal{M}(\R^d;\R^m) $. We can conclude that as $ n \rightarrow \infty $,
\begin{align*}
\begin{split}
&\vectorV_n  \rightarrow \vectorV, \text{ weakly in } L^2{(\R^d;\R^m)},\\
&\mathbf{W}_n \rightarrow \mu_{\mathbf{W}}, \text{ weak(*)ly in } \mathcal{M}(\R^d;\R^m).
\end{split}
\end{align*}
We define $ \overline{\vectorU}= \vectorV+\mu_{\mathbf{W}} $. Clearly $ \overline{\vectorU} \in \mathcal{D}^{\prime}(\R^d;\R^m) $. 
\begin{Def}
	The quantity $ \mathfrak{C}_{\vectorU}=\overline{\vectorU} - \{y \mapsto \langle \nu_y ; \tilde{\vectorU} \rangle \} $ has been termed as \textbf{concentration defect measure.} 
\end{Def}

Here we state a result that gives us the comparison of defect for two different nonlinearities.
\begin{Lemma} \label{LA1}
	
	Suppose $ \mathbf{U}_n: Q(\subset \R^d )\rightarrow \R^m $ and $E: R^m \to [0, \infty]$ is a lower semi-continuous function, 
	\begin{equation} \label{A1}
	E(\vc{U}) \geq |\vc{U}| \ \mbox{as}\ |\vc{U}| \to \infty,
	\end{equation}
	and let $\vc{G}: R^m \to R^n$ be a continuous function such that
	\begin{equation}\label{A2}
	\limsup_{|\vc{U}| \to \infty} | \vc{G}(\vc{U}) | < \liminf_{|\vc{U}| \to \infty} E(\vc{U}).
	\end{equation}
	
	Let $\{ \vc{U}_n \}_{n=1}^\infty$ be a family of measurable functions,
	\begin{equation*} 
	\int_Q E(\vc{U}_n)  \dy \leq 1.
	\end{equation*}
	
	Then 
	\begin{equation} \label{A4}
	\Ov{E(\vc{U})} - \left<\nu_y; E(\widetilde{\vc{U}}) \right> \geq  
	\left| \Ov{ \vc{G} (\vc{U}) } -  \left<\nu_y; \vc{G} (\widetilde{\vc{U}}) \right> \right|.
	\end{equation}
\end{Lemma}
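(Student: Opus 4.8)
The plan is to realise both sides of \eqref{A4} as Radon measures on $Q$ and to prove the inequality between them by testing against an arbitrary $\varphi\in C_c(Q)$ with $\varphi\geq 0$ and an arbitrary $\vc b\in C_c(Q;\R^n)$ with $|\vc b|\leq 1$; the entire analytic content will be squeezed into an elementary pointwise bound of $\vc G$ by $E$ that holds far out in $\R^m$.

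First I would record the compactness. The bound $\int_Q E(\vc U_n)\dy\leq 1$ together with \eqref{A1} forces $\vc U_n$ to be bounded in $L^1_{\rm loc}(Q;\R^m)$, which is what produces the Young measure $\nu_y$ via \cite{Ball1989} (as already asserted in the statement), with $y\mapsto\langle\nu_y;\vc G(\widetilde{\vc U})\rangle\in L^1_{\rm loc}$. From \eqref{A1}--\eqref{A2} there is $R$ with $|\vc G(\vc z)|\leq E(\vc z)$ for $|\vc z|\geq R$, and $|\vc G|\leq M_R:=\max_{\overline{B_R}}|\vc G|$ on $\{|\vc z|\leq R\}$; hence $|\vc G(\vc U_n)|\leq E(\vc U_n)+M_R$ is bounded in $L^1_{\rm loc}$. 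Passing to a subsequence I would then assume $E(\vc U_n)\rightharpoonup\Ov{E(\vc U)}$ in $\mathcal M^+(Q)$ and $\vc G(\vc U_n)\rightharpoonup\Ov{\vc G(\vc U)}$ in $\mathcal M(Q;\R^n)$, and set $\mathfrak C_E:=\Ov{E(\vc U)}-\langle\nu_y;E(\widetilde{\vc U})\rangle\dy$ and $\mathfrak C_{\vc G}:=\Ov{\vc G(\vc U)}-\langle\nu_y;\vc G(\widetilde{\vc U})\rangle\dy$, so that \eqref{A4} is exactly $\mathfrak C_E\geq|\mathfrak C_{\vc G}|$ as measures. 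Lower semicontinuity of $E$, through the defining property of $\nu_y$, gives $\int_Q\varphi\,d\Ov{E(\vc U)}\geq\int_Q\varphi\langle\nu_y;E(\widetilde{\vc U})\rangle\dy$ for all $\varphi\geq 0$; in particular $\mathfrak C_E\geq 0$ and $\langle\nu_y;E(\widetilde{\vc U})\rangle\in L^1(Q)$, a fact needed later.

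The core is a truncation argument. Fix $h\in C_c^\infty(\R^m;[0,1])$ with $h\equiv 1$ on $B_1$, $\mathrm{supp}\,h\subset B_2$, and set $h_k(\vc z)=h(\vc z/k)$. Since $h_k\vc G$ is bounded and continuous, $h_k(\vc U_n)\vc G(\vc U_n)\rightharpoonup\langle\nu_y;h_k\vc G\rangle$ with no concentration, so for any admissible $\varphi,\vc b$,
\[
\int_Q\varphi\,\vc b\cdot d\mathfrak C_{\vc G}
=\lim_{n\to\infty}\int_Q\varphi\,\vc b\cdot\vc G(\vc U_n)(1-h_k(\vc U_n))\dy-\int_Q\varphi\,\vc b\cdot\langle\nu_y;(1-h_k)\vc G\rangle\dy .
\]
Taking $k\geq R$, the function $1-h_k$ is supported in $\{|\vc z|\geq k\}\subset\{|\vc z|\geq R\}$, where $|\vc G|\leq E$; bounding both terms on the right by their moduli and using $\int_Q\varphi E(\vc U_n)\to\int_Q\varphi\,d\Ov{E(\vc U)}$ for the bulk together with $\liminf_n\int_Q\varphi\,h_kE(\vc U_n)\geq\int_Q\varphi\langle\nu_y;h_kE\rangle$ (lower semicontinuity again, $h_kE$ lsc and nonnegative) for the truncated part, I would arrive at
\[
\left|\int_Q\varphi\,\vc b\cdot d\mathfrak C_{\vc G}\right|
\leq\int_Q\varphi\,d\Ov{E(\vc U)}-\int_Q\varphi\langle\nu_y;(2h_k-1)E\rangle\dy .
\]
Because $|(2h_k-1)E|\leq E$ and $\langle\nu_y;E(\widetilde{\vc U})\rangle\in L^1(Q)$, dominated convergence in $k$ replaces the last integral by $\int_Q\varphi\langle\nu_y;E(\widetilde{\vc U})\rangle\dy$, hence $\bigl|\int_Q\varphi\,\vc b\cdot d\mathfrak C_{\vc G}\bigr|\leq\int_Q\varphi\,d\mathfrak C_E$. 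Taking the supremum over $\vc b\in C_c(Q;\R^n)$ with $|\vc b|\leq 1$ turns the left side into $\int_Q\varphi\,d|\mathfrak C_{\vc G}|$, so $\int_Q\varphi\,d|\mathfrak C_{\vc G}|\leq\int_Q\varphi\,d\mathfrak C_E$ for every nonnegative $\varphi\in C_c(Q)$, which is $\mathfrak C_E\geq|\mathfrak C_{\vc G}|$, i.e. \eqref{A4}.

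The main obstacle I anticipate is essentially bookkeeping: $E$ is only lower semicontinuous and may equal $+\infty$, so one must check that $\langle\nu_y;E(\widetilde{\vc U})\rangle$ and $\langle\nu_y;\vc G(\widetilde{\vc U})\rangle$ are finite/integrable precisely where they are used (which is what $\mathfrak C_E\geq 0$ secures), that the splitting of $\vc G(\vc U_n)$ into the concentration-free piece $h_k\vc G$ and a tail is legitimate for each $k$, and that Young-measure lower semicontinuity is invoked with the correct sign in each of the two terms. By contrast the genuinely "geometric" input — that concentrations of $\vc G(\vc U_n)$ are dominated by those of $E(\vc U_n)$ — is entirely encoded in the single pointwise estimate $|\vc G(\vc z)|\leq E(\vc z)$ for $|\vc z|\geq R$ furnished by \eqref{A1}--\eqref{A2}; everything else is soft.
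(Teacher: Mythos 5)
Your proposal is correct, but it takes a genuinely different route from the paper's. The paper's proof is a reduction argument: it cites Lemma~2.1 of \cite{FPAW2016}, which gives the inequality \eqref{A4} when $E$ and $\vc G$ are both continuous, and then extends to lower semi-continuous $E$ by building a monotone sequence of continuous, compactly-cut-off functions $E_n = T\,\max\{|\vc G|,F_n\}$ sandwiched between $|\vc G|$ (at infinity) and $E$, applying the cited lemma to each $E_n$, and finally observing that the concentration defect of any nonnegative l.s.c.\ function is nonnegative, so the defect of $E$ dominates that of $E_n$. Your argument is instead self-contained: you never invoke the continuous-case result, but rather truncate $\vc G$ directly with the cutoffs $h_k$, exploit that $h_k\vc G$ is continuous and bounded so it produces no concentration, dominate the two tail terms by $(1-h_k)E$ via the pointwise bound $|\vc G|\le E$ for $|\vc z|\ge R$, invoke Young-measure lower semi-continuity for the l.s.c.\ integrand $h_kE$, and then let $k\to\infty$ by dominated convergence using $\langle\nu_y;E\rangle\in L^1$. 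Both proofs hinge on exactly the same geometric input (the domination $|\vc G|\le E$ far out, secured by \eqref{A1}--\eqref{A2}), but yours trades the paper's brevity (one citation plus an approximation step) for transparency and independence from the external reference; it also sidesteps the slightly delicate verification that each truncation $E_n$ in the paper's construction actually satisfies the hypotheses of the cited lemma. The result you establish, $\mathfrak C_E\ge|\mathfrak C_{\vc G}|$ tested against all $\varphi\ge 0$ and $|\vc b|\le 1$, is precisely the statement spelled out in Remark~\ref{AR1}, so the conclusion matches.
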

\begin{Rem} \label{AR1}
	
	Here $\Ov{E(\vc{U})} \in \mathcal{M}^+(Q)$ and $\Ov{ \vc{G} (\vc{U}) } \in \mathcal{M}(Q; R^n)$ are the corresponding weak(*) limits and $\nu$ denotes the Young measure generated by $\{ \vc{U}_n \}$. {The inequality \eqref{A4} should be understood as} 
\[
\Ov{E(\vc{U})} - \left<\nu_y; E(\widetilde{\vc{U}}) \right> - 
\left( \Ov{ \vc{G} (\vc{U}) } -  \left<\nu_y; \vc{G} (\widetilde{\vc{U}}) \right> \right) \cdot \xi \geq 0
\]
{for any $\xi \in R^n$, $|\xi| = 1$}.
	
\end{Rem}

\begin{proof}
	
	The result was proved for continuous functions $E$, $\vc{G}$, see Lemma 2.1\cite{FPAW2016}. 
	To extend it to the class of lower semi-continuous functions like $E$, we first observe that there is a sequence of continuous functions 
	$F_n \in C(R^m)$ such that 
	\[
	0 \leq F_n \leq E,\ F_n \nearrow E.
	\]
	In view of \eqref{A2}, there exists $R > 0$ such that 
	\[
	|\vc{G}(\vc{U})| < E(\vc{U}) \ \mbox{whenever}\ |\vc{U}| > R.
	\]
	Consider a function 
	\[
	T: C^\infty (R^m), \ 0 \leq T \leq 1, \ T(\vc{U}) = 0 \ \mbox{for}\ |\vc{U}| \leq R,\ 
	T(\vc{U}) = 1 \ \mbox{for}\ |\vc{U}| \geq R + 1. 
	\]	
	Finally, we construct a sequence 
	\[
	E_n(\vc{U}) = T (\vc{U}) \max\{ |\vc{G}(\vc{U})|; F_n (\vc{U}) \}.
	\]
	We have 
	\[
	0 \leq E_n (\vc{U}) \leq E(\vc{U}),\ E_n (\vc{U}) \geq |\vc{G}(\vc{U})| \ \mbox{for all}\  |\vc{U}| \geq R + 1.
	\]
	Applying Lemma 2.1 in \cite{FPAW2016} we get 
	\[
	\Ov{E_n(\vc{U})} - \left<\nu_y; E_n(\widetilde{\vc{U}}) \right> \geq  
	\left| \Ov{ \vc{G} (\vc{U}) } -  \left<\nu_y; \vc{G} (\widetilde{\vc{U}}) \right> \right|  
	\]
	for any $n$. 
	Thus the proof reduces to showing 
	\[
	\Ov{E_n(\vc{U})} - \left<\nu_y; E_n(\widetilde{\vc{U}}) \right> \leq 
	\Ov{E(\vc{U})} - \left<\nu_y; E(\widetilde{\vc{U}}) \right>,   
	\]
	or, in other words, to showing 
	\begin{equation*} 
	\Ov{H(\vc{U})} - \left<\nu_y; H(\widetilde{\vc{U}}) \right> \geq 0 \ 
	\mbox{whenever}\ H: R^m \to [0, \infty] \ \mbox{is an l.s.c function.}
	\end{equation*}
	Repeating the above arguments, we construct a sequence 
	\[
	0 \leq H_n \leq H \ \mbox{of bounded continuous functions, }\  H_n \nearrow H. 
	\]
	Consequently, 
	\[
	0 \leq \Ov{H(\vc{U}}) - \Ov{H_n(\vc{U}}) = \Ov{H(\vc{U}}) - \left< \nu_y; H_n (\widetilde{\vc{U}}) \right>
	\to \Ov{H(\vc{U}}) - \left< \nu_y; H (\widetilde{\vc{U}}) \right>
	\]
	as $n \to \infty$.
	
\end{proof}
\subsection{Consequences of finiteness of a concentration defect}
Feireisl and Hofmanov\'a in \cite{FH2019} Proposition 4.1 have been proved the following proposition:
\begin{Prop}\label{Def_prop2}
	Let $\mathbb{D} \in \mathcal{M}^{+}(\R^d;\R^{d\times d}_{\text{sym}}) $ satisfy
	\begin{align*}
	\int_{ \R^d} \nabla_{x} \vectorphi : \text{d} \mathbb{D} =0 \text{ for any } \vectorphi \in C_c^1(\R^d;\R^d), 
	\end{align*} 
	Then $\mathbb{D} =0$.	
\end{Prop}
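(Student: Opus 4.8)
The plan is to read the hypothesis as the assertion that $\mathbb{D}$ is divergence-free row by row in $\mathcal{D}'(\R^d;\R^d)$, and then to exploit the two extra structural features that $\mathbb{D}$ carries: it is positive semidefinite, and (being an element of $\mathcal{M}^+$) it has finite total variation $|\mathbb{D}|(\R^d)<\infty$. The finiteness of the mass will, via a localization argument, force the total mass of every entry of $\mathbb{D}$ to vanish, and positive semidefiniteness will then upgrade this to $\mathbb{D}=0$.

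First I would fix a cut-off $\chi\in C_c^\infty(\R^d)$ with $0\le\chi\le 1$, $\chi\equiv 1$ on $B_1$, $\mathrm{supp}\,\chi\subset B_2$, and set $\chi_R(x)=\chi(x/R)$. For fixed indices $k,l$, the vector field $\vectorphi_R(x)=x_k\,\chi_R(x)\,e_l$ belongs to $C_c^1(\R^d;\R^d)$, and a direct computation of $\nabla_x\vectorphi_R$ gives
\[
\int_{\R^d}\nabla_x\vectorphi_R:\mathrm{d}\mathbb{D}
=\int_{\R^d}\chi_R\,\mathrm{d}\mathbb{D}_{kl}+\int_{\R^d}x_k\sum_{i=1}^d\partial_i\chi_R\,\mathrm{d}\mathbb{D}_{il}.
\]
Plugging this into the hypothesis and rearranging yields
\[
\int_{\R^d}\chi_R\,\mathrm{d}\mathbb{D}_{kl}=-\int_{\R^d}x_k\sum_{i=1}^d\partial_i\chi_R\,\mathrm{d}\mathbb{D}_{il}.
\]
On $\mathrm{supp}\,\nabla\chi_R$ one has $R\le|x|\le 2R$ and $|\nabla\chi_R|\le C/R$, so the integrand on the right is bounded by $2C$ uniformly in $R$ and is supported in the annulus $\{R\le|x|\le 2R\}$; since $|\mathbb{D}|(\R^d)<\infty$, this forces the right-hand side to $0$ as $R\to\infty$. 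By dominated convergence the left-hand side tends to $\mathbb{D}_{kl}(\R^d)$, so $\mathbb{D}_{kl}(\R^d)=0$ for all $k,l$.

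Next, choosing $\xi=e_k$ in the defining property of $\mathcal{M}^+(\R^d;\R^{d\times d}_{\text{sym}})$ shows that each $\mathbb{D}_{kk}$, and hence $\trace\,\mathbb{D}=\sum_k\mathbb{D}_{kk}$, is a non-negative measure; by the previous step its total mass is $0$, so $\trace\,\mathbb{D}=0$. To conclude $\mathbb{D}=0$ I would convolve with a non-negative mollifier $\rho_\varepsilon$: the field $\mathbb{D}_\varepsilon=\mathbb{D}*\rho_\varepsilon$ is smooth, still pointwise positive semidefinite, and satisfies $\trace\,\mathbb{D}_\varepsilon=(\trace\,\mathbb{D})*\rho_\varepsilon=0$; a continuous positive semidefinite matrix with vanishing trace is the zero matrix, so $\mathbb{D}_\varepsilon\equiv 0$ for every $\varepsilon>0$, and letting $\varepsilon\to 0$ gives $\mathbb{D}=0$. (Alternatively, one invokes that the Radon--Nikodym density of $\mathbb{D}$ with respect to $|\mathbb{D}|$ is positive semidefinite $|\mathbb{D}|$-a.e., whence $|\mathbb{D}|\le C_d\,\trace\,\mathbb{D}=0$.) The only genuinely delicate point is this last passage from $\trace\,\mathbb{D}=0$ to $\mathbb{D}=0$ for a matrix-valued measure; the mollification trick disposes of it cleanly because convolution with a non-negative kernel preserves the pointwise order $\mathbb{D}_\varepsilon(x)\ge 0$. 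Everything else is a routine cut-off argument, made possible precisely by the finiteness of the mass of $\mathbb{D}$.
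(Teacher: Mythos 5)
Your proof is correct and follows essentially the same route as the one the paper relies on (Proposition 4.1 of Feireisl--Hofmanov\'a): test fields of the form (linear coordinate function)$\times$(cut-off $\chi_R$ with $|\nabla_x\chi_R|\lesssim 1/R$), the finiteness of the total mass of $\mathbb{D}$ killing the annulus term, and positive semidefiniteness upgrading the vanishing of the (trace) mass to $\mathbb{D}=0$. The only cosmetic difference is that the cited proof uses the single field $\vectorphi=\chi_R(x)\,x$ to reach $\operatorname{trace}\mathbb{D}$ directly, whereas you argue entrywise with $x_k\chi_R e_l$ and then finish by mollification, which is equally valid.
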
  
The key ingredient of the proof is the consideration of the sequence of cut off function $ \{\chi_n\}_{n\in \mathbb{N}} $ such that
\begin{align}\label{Cutoff1}
\begin{split}
&\chi_n \in C_c^{\infty} (\R^d),\; 0\leq \chi_n \leq 1, \; \chi_n(x)=1  \text{ for } \vert x \vert \leq n,\; \chi_n(x)=0 \text{ for } \vert x \vert \geq 2n, \; \\
&\vert \nabla_{x} \chi_n \vert  \leq \frac{2}{n} \text{ uniformly as } n\rightarrow \infty.
\end{split}
\end{align}
That leads us to conclude the next result,
\begin{Cor}\label{Def_prop2:cor1}
	Let $ \mathbb{D} =\{\mathbb{D}_{ij}\}_{i,j=1}^{d} \in L^{\infty}_{\text{weak-(*)}}((0,T;\mathcal{M}(\R^d;\R^{d\times d}))$
	{be such that}
	\begin{align*}
	\int_{0}^{T} \int_{ \R^d} \nabla_{x} \phi : \text{ d}\mathbb{D} \;dt =0 
	\end{align*}
	{for any $\phi \in \mathcal{D}((0,T)\times \R^d;\R^d)$}.\\
	Then, for any {$ \psi \in C_c^{\infty}(0,T;C^1(\R^d;\R^d))$, $\nabla_x \psi \in C_c^\infty (0,T;L^\infty(\R^d; \R^{d \times d})) $}, we have
	\begin{align*}
	\int_{0}^{T} \int_{ \R^d} \nabla_{x} \psi : \text{ d}\mathbb{D}\; dt =0. 
	\end{align*}
\end{Cor}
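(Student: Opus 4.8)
The plan is to approximate a general test function $\psi$ with $\nabla_x\psi \in C_c^\infty(0,T;L^\infty(\R^d;\R^{d\times d}))$ by functions in $\mathcal{D}((0,T)\times\R^d;\R^d)$ in a way that is compatible with pairing against $\mathbb{D}$. The obstacle is that $\psi$ itself need not have compact support in $x$ — only its gradient does — so we cannot directly mollify and cut off. Instead, I would exploit the gradient structure: first reduce to the case where $\psi$ does not depend on $t$ (absorbing the $C_c^\infty(0,T)$ factor by Fubini, since the hypothesis holds for every fixed-time slab via standard arguments, or more carefully by a tensor-product density argument), and then handle the spatial variable using the cut-off sequence $\{\chi_n\}$ from \eqref{Cutoff1}.

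The key computation is this: given $\psi$ with $\nabla_x\psi$ bounded and compactly supported in $x$ (say $\mathrm{supp}\,\nabla_x\psi \subset B_R$ for a fixed ball), write $\psi_n = \chi_n \psi$. Then $\psi_n \in C_c^\infty(0,T;C^1_c(\R^d;\R^d))$, which can be further mollified to land in $\mathcal{D}((0,T)\times\R^d;\R^d)$, so the hypothesis gives $\int_0^T\int_{\R^d}\nabla_x\psi_n : \mathrm{d}\mathbb{D}\,dt = 0$. Now expand
\begin{align*}
\nabla_x\psi_n = \chi_n \nabla_x\psi + \psi \otimes \nabla_x\chi_n.
\end{align*}
For $n \geq R$ we have $\chi_n \equiv 1$ on $B_R \supset \mathrm{supp}\,\nabla_x\psi$, so $\chi_n\nabla_x\psi = \nabla_x\psi$ identically; hence $\int_0^T\int\nabla_x\psi:\mathrm{d}\mathbb{D}\,dt = -\int_0^T\int \psi\otimes\nabla_x\chi_n : \mathrm{d}\mathbb{D}\,dt$. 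It therefore suffices to show the right-hand side vanishes as $n\to\infty$.

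To control $\int_0^T\int \psi\otimes\nabla_x\chi_n : \mathrm{d}\mathbb{D}\,dt$, I would use $|\nabla_x\chi_n| \leq 2/n$, supported in the annulus $n \leq |x| \leq 2n$, so the integrand is bounded by $\frac{2}{n}|\psi|\,\mathbf{1}_{\{n\leq|x|\leq 2n\}}\,\mathrm{d}|\mathbb{D}|$. The subtlety is that $\psi$ itself may grow at infinity (only its gradient is compactly supported), so on the annulus $|\psi(x)|$ may be as large as $O(n)$ — the two powers of $n$ cancel the gain, leaving a term bounded by $C\int_0^T\int_{\{n\leq|x|\leq 2n\}}\mathrm{d}|\mathbb{D}|\,dt$, which tends to $0$ because $\mathbb{D}$ has finite total variation (being in $L^\infty_{\text{weak-}(*)}(0,T;\mathcal{M})$) and the annuli are disjoint for widely separated $n$ — pass to a subsequence $n_k = 2^k$ so $\{n_k \leq |x| \leq 2n_k\}$ are essentially disjoint, whence $\sum_k \int_0^T\int_{\{n_k\leq|x|\leq 2n_k\}}\mathrm{d}|\mathbb{D}|\,dt < \infty$ and the general term goes to zero. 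Actually one should be slightly more careful since $\nabla_x\psi$ compactly supported only forces $\psi$ to be constant (in $x$) outside $B_R$ along each ray, so $\psi$ is bounded outside $B_R$ after subtracting that constant — but the constant part contributes $\int\nabla_x(\text{const})\otimes\cdots$ which is already accounted for; in any case the crude $O(n)$ bound suffices. Putting this together, $\int_0^T\int\nabla_x\psi:\mathrm{d}\mathbb{D}\,dt = -\lim_{k\to\infty}\int_0^T\int\psi\otimes\nabla_x\chi_{n_k}:\mathrm{d}\mathbb{D}\,dt = 0$, which is the claim. The main obstacle, as indicated, is ensuring the competition between the $1/n$ decay of $\nabla_x\chi_n$ and the possible linear growth of $\psi$ is resolved by the finiteness (and hence tail-smallness) of the total variation of $\mathbb{D}$; everything else is routine density and mollification.
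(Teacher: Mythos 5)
Your overall approach — multiply by the cut-off $\chi_n$, mollify, invoke the hypothesis, and then send $n \to \infty$ using the $1/n$ decay of $\nabla_x \chi_n$ against the finite total variation of $\mathbb{D}$ — is precisely the intended route, and the paper itself gives no explicit proof beyond pointing to the cut-off sequence \eqref{Cutoff1}. However, you have misread the hypothesis on the gradient. The condition $\nabla_x \psi \in C_c^\infty(0,T;L^\infty(\R^d;\R^{d\times d}))$ means that $\nabla_x \psi$ is compactly supported \emph{in time} and merely \emph{bounded} in $x$; it is \emph{not} compactly supported in $x$. Consequently your key identity ``$\chi_n \nabla_x \psi = \nabla_x \psi$ for $n \geq R$'' has no $R$ to appeal to and fails. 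The correct replacement is soft: $\chi_n \nabla_x \psi \to \nabla_x \psi$ pointwise, and since $|\chi_n \nabla_x \psi| \leq \|\nabla_x \psi\|_{L^\infty}$ and $|\mathbb{D}|(t)(\R^d) \in L^1(0,T)$, dominated convergence gives
\[
\int_0^T \int_{\R^d} \chi_n \nabla_x \psi : \mathrm{d}\mathbb{D} \, dt \longrightarrow \int_0^T \int_{\R^d} \nabla_x \psi : \mathrm{d}\mathbb{D} \, dt .
\]

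Your treatment of the commutator term $\psi \otimes \nabla_x \chi_n$ also reveals the same confusion, but in a way that accidentally lands on the right estimate. Under your (incorrect) reading $\psi$ would be constant, hence $O(1)$, outside a fixed ball and the term would already decay like $O(1/n)$; the linear growth $|\psi| = O(|x|)$ that makes the two powers of $n$ cancel is exactly what happens under the \emph{actual} hypothesis (bounded gradient, no compact support). So your quantitative bound $|\psi \otimes \nabla_x \chi_n| = O(1)$ on the annulus $\{n \leq |x| \leq 2n\}$ is the correct one for the statement as written. Finally, the dyadic-subsequence argument is an unnecessary detour: since $|\mathbb{D}|(t)\big(\{n \leq |x| \leq 2n\}\big) \to 0$ for a.e.\ $t$ and is dominated by $|\mathbb{D}|(t)(\R^d) \in L^1(0,T)$, dominated convergence in $t$ already gives $\int_0^T |\mathbb{D}|(t)\big(\{n \leq |x| \leq 2n\}\big)\, dt \to 0$ along the full sequence. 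With these two corrections the proof is sound and coincides with what the paper has in mind.
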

Here we state a lemma that is quite similar to above proposition. The difference here is instead of matrix valued measure we consider vector valued measure.
\begin{Lemma}\label{Def_lem2}
	Let $ \mathbb{D}=\{\mathbb{D}_{i}\}_{i=1}^{d} \in L^{\infty}_{\text{weak-(*)}}((0,T;\mathcal{M}(\R^d;\R^d))$ 
	{be such that}
	\begin{align*}
	\int_{0}^{T} \int_{ \R^d} \nabla_{x} \phi \cdot \text{ d}\mathbb{D}\; dt =0, \text{ for any }\phi \in \mathcal{D}((0,T)\times \R^d) . 
	\end{align*}
	Then, for {any $ \psi \in C_c^{\infty}(0,T;C^1(\R^d) \cap W^{1, \infty}(\R^d)) $, $\nabla_x \psi \in L^\infty(\R^d; \R^{d}) $
	}, we have
	\begin{align*}
	\int_{0}^{T} \int_{ \R^d} \nabla_{x} \psi \cdot \text{ d}\mathbb{D} \;dt =0. 
	\end{align*}
\end{Lemma}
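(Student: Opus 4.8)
The plan is to reduce the statement to Corollary~\ref{Def_prop2:cor1} by a symmetrization-and-transposition trick. Given a vector-valued measure $\mathbb{D} = \{\mathbb{D}_i\}_{i=1}^d$ annihilated by all gradients $\nabla_x\phi$ with scalar test functions $\phi$, I want to promote it to a matrix-valued measure to which the corollary applies. The natural device is to pick an arbitrary fixed vector $e \in \R^d$ and form the rank-one matrix-valued measure $\mathbb{D} \otimes e$, i.e. the measure whose $(i,j)$ entry is $e_j\,\mathbb{D}_i$. For a matrix-valued test function of the special form $\nabla_x\phi \otimes e$ with $\phi$ scalar we have $\nabla_x(\phi\, e^{\mathsf{T}}\!\cdot) $ type identities, so the hypothesis on $\mathbb{D}$ translates exactly into $\int_0^T\!\int_{\R^d} \nabla_x \Phi : \mathrm{d}(\mathbb{D}\otimes e)\,dt = 0$ for all matrix fields $\Phi$ of product form, and by linearity in the rows for all $\Phi \in \mathcal{D}((0,T)\times\R^d;\R^{d\times d})$. (One has to be slightly careful that $\mathbb{D}\otimes e$ need not be symmetric, so rather than literally invoking Corollary~\ref{Def_prop2:cor1} as stated I would re-run its one-line proof, which only used the cut-off sequence \eqref{Cutoff1} and made no essential use of symmetry.)

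Concretely, the core of the argument is the cut-off truncation, exactly as in the proof of Proposition~\ref{Def_prop2} and its corollary. Given $\psi \in C_c^\infty(0,T;C^1(\R^d)\cap W^{1,\infty}(\R^d))$, set $\psi_n = \chi_n \psi$ with $\chi_n$ as in \eqref{Cutoff1}. Then $\psi_n \in \mathcal{D}((0,T)\times\R^d)$, so $\int_0^T\!\int_{\R^d} \nabla_x\psi_n \cdot \mathrm{d}\mathbb{D}\,dt = 0$ by hypothesis. Expanding $\nabla_x\psi_n = \chi_n \nabla_x\psi + \psi\,\nabla_x\chi_n$, the first term converges to $\int_0^T\!\int_{\R^d}\nabla_x\psi\cdot\mathrm{d}\mathbb{D}\,dt$ by dominated convergence (using $\nabla_x\psi \in L^\infty$, $|\chi_n|\le 1$, $\chi_n \to 1$ pointwise, and that $\|\mathbb{D}\|$ is a finite measure for a.e.\ $t$ with $t\mapsto\|\mathbb{D}(t)\|$ integrable — this is where $\mathbb{D}\in L^\infty_{\mathrm{weak}\text{-}(*)}$ enters). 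The second term is the one to control: $\big|\int_0^T\!\int_{\R^d}\psi\,\nabla_x\chi_n\cdot\mathrm{d}\mathbb{D}\,dt\big| \le \|\psi\|_{L^\infty}\,\frac{2}{n}\int_0^T \|\mathbb{D}(t)\|_{\mathcal{M}(\{n\le|x|\le 2n\})}\,dt$, and since $\int_0^T\|\mathbb{D}(t)\|_{\mathcal{M}(\R^d)}\,dt < \infty$ the tail $\int_0^T\|\mathbb{D}(t)\|_{\mathcal{M}(\{|x|\ge n\})}\,dt \to 0$, so this term vanishes as $n\to\infty$ even without the factor $2/n$. Hence $\int_0^T\!\int_{\R^d}\nabla_x\psi\cdot\mathrm{d}\mathbb{D}\,dt = 0$.

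In fact, once one traces through what is actually needed, the promotion to matrix-valued measures is unnecessary: the scalar-to-scalar cut-off argument in the previous paragraph already gives the conclusion directly, provided we may approximate a general $\phi \in \mathcal{D}((0,T)\times\R^d)$-test by — or rather, provided the hypothesis is already stated for all $\phi \in \mathcal{D}((0,T)\times\R^d)$, which it is. So I would present the direct argument: the only subtlety is that the hypothesis gives vanishing only against compactly supported $\phi$, and $\psi$ is merely bounded with bounded gradient, so the cut-off $\chi_n$ is exactly the bridge, and its controlled gradient \eqref{Cutoff1} together with the finite total variation of $\mathbb{D}$ does the rest.

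\textbf{Main obstacle.} The one genuinely delicate point is justifying that $\int_0^T \|\mathbb{D}(t)\|_{\mathcal{M}(\R^d)}\,dt < \infty$ and that the outer-region masses $\int_0^T\|\mathbb{D}(t)\|_{\mathcal{M}(\{|x|\ge n\})}\,dt$ tend to $0$ — i.e.\ the interchange of the $t$-integral with the $n\to\infty$ limit and the measurability of $t \mapsto \|\mathbb{D}(t)\|_{\mathcal{M}(A)}$ for Borel $A$. This is a standard consequence of $\mathbb{D} \in L^\infty_{\mathrm{weak}\text{-}(*)}(0,T;\mathcal{M}(\R^d;\R^d))$ (essential boundedness of $t\mapsto\|\mathbb{D}(t)\|$ plus finiteness of $(0,T)$) combined with monotone/dominated convergence for the nested sets $\{|x|\ge n\}\downarrow\emptyset$, but it is the place where one must be careful rather than formal. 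Everything else is a verbatim repetition of the cut-off computation already used for Proposition~\ref{Def_prop2} and Corollary~\ref{Def_prop2:cor1}.
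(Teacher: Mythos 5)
Your final, direct cut-off argument (discard the $\mathbb{D}\otimes e$ detour, test with $\chi_n\psi$, split $\nabla_x(\chi_n\psi)=\chi_n\nabla_x\psi+\psi\nabla_x\chi_n$, pass to the limit using the uniform total-variation bound) is exactly the approach the paper intends, since it proves this lemma by the same cut-off sequence \eqref{Cutoff1} used for Proposition \ref{Def_prop2} and Corollary \ref{Def_prop2:cor1}, and your reasoning is correct. Two small remarks: $\chi_n\psi$ is only $C_c^1$ in $x$, not in $\mathcal{D}((0,T)\times\R^d)$, so strictly one should add a one-line mollification/density step to extend the hypothesis to $C^1_c$ test functions; and your ``main obstacle'' is not really one, since the crude bound $\|\psi\|_{L^\infty}\,\tfrac{2}{n}\,T\,\operatorname{ess\,sup}_t\Vert\mathbb{D}(t)\Vert_{\mathcal{M}(\R^d)}\to 0$ already disposes of the error term without any discussion of tail masses or measurability of $t\mapsto\Vert\mathbb{D}(t)\Vert_{\mathcal{M}(A)}$.
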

\subsection{Convergence result }
Here we state two convergence results. 
\begin{Lemma}\label{ym1}
	Let $ \{\vectorv_n\}_{n\in \mathbb{N}} $, $ \vectorv_n :\R^d \rightarrow \R^m  $,  
$\{\vectorv_n\}_{n\in \mathbb{N}}$ bounded in $L^1_{\rm loc}(R^d,; R^m)$,	
	generate a Young measure $ \nu $. Suppose $ \vectorv(y)=\langle \nu_y; \tilde{\vectorv} \rangle $ is the barycenter of the Young measure and $ \nu_y= \delta_{\vectorv (y)} $ for a.e. $y \in R^d$, then $ \vectorv_n \rightarrow \vectorv \text{ in measure}.$
\end{Lemma}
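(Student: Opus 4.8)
\textbf{Proof strategy for Lemma \ref{ym1}.}

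The plan is to reduce the statement to the classical characterization that a Young measure degenerates into Dirac masses if and only if the generating sequence converges in measure, the only subtlety being that the $\vectorv_n$ are merely bounded in $L^1_{\rm loc}$, so they need not be uniformly integrable and the Young measure may carry concentrations. First I would fix a ball $B_R \subset \R^d$ and work on $B_R$ throughout, since convergence in measure is a local notion. The Fundamental Theorem on Young measures (Ball \cite{Ball1989}) applied to $\{\vectorv_n\}$ restricted to $B_R$ gives, for any bounded continuous $g \in C_b(\R^m)$, that $g(\vectorv_n) \rightharpoonup \overline{g(\vectorv)}$ weak-$(*)$ in $L^\infty(B_R)$ with $\overline{g(\vectorv)}(y) = \langle \nu_y; g(\tilde{\vectorv})\rangle$ for a.e.\ $y$ (here no concentration enters because $g$ is bounded). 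Under the hypothesis $\nu_y = \delta_{\vectorv(y)}$ this reads $g(\vectorv_n) \rightharpoonup g(\vectorv)$ weak-$(*)$ for every $g \in C_b(\R^m)$.

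The key step is then a standard truncation-and-test argument. Let $\varepsilon > 0$ and set, for $k > 0$, the truncation $\vectorv^k = \vectorv \, \mathbb{1}_{\{|\vectorv| \le k\}}$; since $\vectorv \in L^1_{\rm loc}$ is finite a.e., we have $|\{y \in B_R : |\vectorv(y)| > k\}| \to 0$ as $k \to \infty$, so fix $k$ with this measure below $\varepsilon$. On the set $\{|\vectorv| \le k\}$ I would apply the weak-$(*)$ convergence to the two bounded continuous functions $g_1(z) = \min\{|z - \vectorv(y)|, 1\}$-type test objects — more precisely, one tests against $g(z) = \phi(z)$ for $\phi \in C_c(\R^m)$ approximating the indicator of a small neighbourhood of the (a.e.\ defined) value $\vectorv(y)$, combined with a covering of $B_R$ by sets on which $\vectorv$ is nearly constant (Lusin / Egorov). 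A cleaner packaging: by the Young measure representation, for the bounded continuous function $(y,z) \mapsto \min\{|z-\vectorv(y)|,1\}$ one gets $\min\{|\vectorv_n - \vectorv|,1\} \rightharpoonup \langle \nu_y; \min\{|\tilde{\vectorv} - \vectorv(y)|,1\}\rangle = 0$ weak-$(*)$ in $L^\infty(B_R)$, using $\nu_y = \delta_{\vectorv(y)}$. Since $B_R$ has finite measure, weak-$(*)$ convergence of the bounded sequence $\min\{|\vectorv_n - \vectorv|,1\}$ to $0$ against the constant test function $\mathbb{1}_{B_R}$ gives $\int_{B_R} \min\{|\vectorv_n - \vectorv|,1\}\,\dy \to 0$, and a uniformly bounded nonnegative sequence whose integral tends to zero converges to zero in measure on $B_R$; hence $|\{y \in B_R: |\vectorv_n(y) - \vectorv(y)| > \delta\}| \to 0$ for every $\delta \in (0,1)$. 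Letting $R \to \infty$ through a countable exhaustion yields convergence in measure on every compact set, which is the assertion.

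The main obstacle — and the only place where care is genuinely needed — is the dependence of the test function on $y$: one must invoke the version of the Fundamental Theorem that allows Carathéodory integrands $\Phi(y,z)$ (continuous in $z$, measurable in $y$, and here bounded, namely $\Phi(y,z) = \min\{|z - \vectorv(y)|,1\}$), so that $\Phi(y,\vectorv_n(y)) \rightharpoonup \langle \nu_y; \Phi(y,\cdot)\rangle$ weak-$(*)$ in $L^\infty(B_R)$. This is standard (it follows from the $C_b$-version by approximating $\vectorv$ by simple functions via Lusin's theorem and using that $\Phi$ is uniformly continuous in $z$ uniformly in $y$), but it is the step that must be written out. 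Boundedness of the integrand is what lets us bypass any uniform-integrability hypothesis and hence the concentration part of the defect measure — which is precisely why the $L^1_{\rm loc}$ bound on $\vectorv_n$ (rather than, say, an $L^p$ bound) suffices for this particular conclusion.
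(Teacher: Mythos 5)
The paper does not actually give a proof of Lemma \ref{ym1}; it is stated as a known fact from Young measure theory and used directly (only the subsequent Lemma \ref{wl1} carries a proof, by reference to \cite{FLMS_book}). So there is no in-paper argument to compare against. That said, your proof is correct and is the standard one. The central observation --- test the Young measure against the \emph{bounded} Carath\'eodory integrand $\Phi(y,z)=\min\{|z-\vectorv(y)|,1\}$, so that no uniform integrability is required and concentration is irrelevant, then read off $\langle\nu_y;\Phi(y,\cdot)\rangle=0$ from the Dirac hypothesis, integrate against $\mathbb{1}_{B_R}$ to get $\int_{B_R}\min\{|\vectorv_n-\vectorv|,1\}\,\dy\to0$, and conclude local convergence in measure by Chebyshev --- is exactly right, and your remark that boundedness of $\Phi$ is what makes the $L^1_{\rm loc}$ hypothesis sufficient is the key structural point. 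Two small things worth stating explicitly if you were to write this out: (i) the Carath\'eodory extension of Ball's theorem needs $\vectorv\in L^1_{\rm loc}$ to be measurable so that $\Phi$ is jointly measurable, which you have; and (ii) the conclusion is \emph{local} convergence in measure (on every set of finite measure), which is the only sensible reading on $\R^d$ and is what the paper uses downstream --- your exhaustion step handles this correctly.
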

\begin{Lemma}\label{wl1}
	Let $Q \subset R^d$ be a bounded domain, and let $\{ \mathbf{v}_n \}_{n=1}^\infty$ be sequence of vector--valued functions, 
	\[
	\mathbf{v}_n : Q \to R^k , \ \int_Q{ | \mathbf{v}_n| } \leq c \ \mbox{uniformly for}\ n \to \infty,
	\]
	generating a Young measure $\nu_y \in \mathcal{P}[R^k]$, $y \in Q$. Suppose that 
	\begin{equation*} 
	E(\mathbf{v}_n) \to \left< \nu_y ; E(\widetilde{\mathbf{v}}) \right> \ \mbox{weak(*)ly in}\ \mathcal{M}(\overline{Q}),
	\ \left< \nu_y; E(\widetilde{\mathbf{v}}) \right> \in L^1(Q),
	\end{equation*}
	where $E: R^d \to [0, \infty]$ is an l.s.c. function. 
	
	Then 
	\[
	E(\mathbf{v}_n) \to \left< \nu_y ; E(\widetilde{\mathbf{v}}) \right> \ \mbox{weakly in}\ L^1(Q).
	\]
\end{Lemma}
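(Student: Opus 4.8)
\textbf{Proof proposal for Lemma \ref{wl1}.} The plan is to reduce the assertion to the equi-integrability (uniform integrability) of the family $\{E(\mathbf{v}_n)\}_{n=1}^\infty$ in $L^1(Q)$, and then invoke the Dunford--Pettis theorem. First observe that, since $E(\mathbf{v}_n) \geq 0$ converges weak(*) in $\mathcal{M}(\overline{Q})$, the sequence is bounded in $L^1(Q)$; testing the weak(*) convergence with the constant function $1 \in C(\overline{Q})$ gives in particular
\[
\int_Q E(\mathbf{v}_n) \,\mathrm{d}y \longrightarrow \int_Q \langle \nu_y ; E(\widetilde{\mathbf{v}})\rangle \,\mathrm{d}y .
\]
Granting equi-integrability, Dunford--Pettis yields relative weak compactness of $\{E(\mathbf{v}_n)\}$ in $L^1(Q)$; and since weak $L^1$ convergence implies weak(*) convergence in $\mathcal{M}(\overline{Q})$, every weakly convergent subsequence must have the already-identified limit $\langle \nu_y; E(\widetilde{\mathbf{v}})\rangle \in L^1(Q)$, so the whole sequence converges weakly in $L^1(Q)$.

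The core of the argument is the equi-integrability, which I would obtain by a truncation device. Set $E_k := \min\{E,k\}$, a bounded, non-negative, lower semi-continuous function, and write $E - E_k = (E-k)^+$, again non-negative and l.s.c. Since $\langle \nu_y; E(\widetilde{\mathbf{v}})\rangle \in L^1(Q)$, monotone convergence gives $\langle \nu_y; E_k(\widetilde{\mathbf{v}})\rangle \nearrow \langle \nu_y; E(\widetilde{\mathbf{v}})\rangle$ in $L^1(Q)$, equivalently $\int_Q \langle \nu_y;(E-E_k)(\widetilde{\mathbf{v}})\rangle \,\mathrm{d}y \to 0$ as $k\to\infty$. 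On the other hand, writing the bounded l.s.c. function $E_k$ as the increasing limit of bounded continuous functions $\phi_j$ and using that, for each fixed $j$, $\int_Q \phi_j(\mathbf{v}_n)\,\mathrm{d}y \to \int_Q \langle\nu_y;\phi_j\rangle\,\mathrm{d}y$ (bounded continuous integrand against the generated Young measure on the bounded set $Q$), one gets the lower semicontinuity bound
\[
\liminf_{n\to\infty} \int_Q E_k(\mathbf{v}_n)\,\mathrm{d}y \;\geq\; \int_Q \langle\nu_y; E_k(\widetilde{\mathbf{v}})\rangle \,\mathrm{d}y .
\]
Combining this with the convergence of the full integrals of $E(\mathbf{v}_n)$ displayed above,
\[
\limsup_{n\to\infty}\int_Q (E-E_k)(\mathbf{v}_n)\,\mathrm{d}y \;\leq\; \int_Q \langle\nu_y;E(\widetilde{\mathbf{v}})\rangle\,\mathrm{d}y - \int_Q\langle\nu_y;E_k(\widetilde{\mathbf{v}})\rangle\,\mathrm{d}y \;=\; \int_Q\langle\nu_y;(E-E_k)(\widetilde{\mathbf{v}})\rangle\,\mathrm{d}y,
\]
which tends to $0$ as $k\to\infty$.

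To finish, let $\varepsilon > 0$ and a measurable $A \subset Q$ be given. Choose $k$ so large that $\limsup_n \int_Q (E-E_k)(\mathbf{v}_n) < \varepsilon/2$, then $N$ so large that $\int_Q (E-E_k)(\mathbf{v}_n) < \varepsilon/2$ for all $n \geq N$. For such $n$,
\[
\int_A E(\mathbf{v}_n)\,\mathrm{d}y \;=\; \int_A E_k(\mathbf{v}_n)\,\mathrm{d}y + \int_A (E-E_k)(\mathbf{v}_n)\,\mathrm{d}y \;\leq\; k\,|A| + \frac{\varepsilon}{2} < \varepsilon
\]
whenever $|A| < \varepsilon/(2k)$; the finitely many functions $E(\mathbf{v}_1),\dots,E(\mathbf{v}_{N-1}) \in L^1(Q)$ are trivially equi-integrable, so shrinking $|A|$ further handles them as well. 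This establishes equi-integrability and completes the proof. The step I expect to require the most care is the lower semicontinuity inequality for the bounded l.s.c. integrand $E_k$ and the passage to continuous approximants $\phi_j$ compatible with the generated Young measure --- essentially a careful invocation of the fundamental theorem of Young measures of Ball \cite{Ball1989}, exactly in the spirit of the approximation performed in the proof of Lemma \ref{LA1}.
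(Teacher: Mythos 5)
Your proposal is correct and takes exactly the route the paper indicates: the paper's proof consists of the single remark that the assertion reduces to equi-integrability of $\{E(\mathbf{v}_n)\}_n$, with the details deferred to \cite{FLMS_book}. Your truncation argument with $E_k=\min\{E,k\}$, combined with the lower semicontinuity bound $\liminf_n\int_Q E_k(\mathbf{v}_n)\geq\int_Q\langle\nu_y;E_k(\widetilde{\mathbf{v}})\rangle$ obtained from bounded continuous approximants and the fundamental theorem of Young measures, correctly supplies those details.
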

\begin{proof}
	Enough to prove the equi-integrability of $ \{E(\vectorv_n)\}_{n\in\mathbb{N}} $. A detailed proof is in \cite{FLMS_book}. 
\end{proof}
\section{Convergence of approximate solutions from the first approximation problem  }
In this section our main goal is to prove the Theorem \ref{th1}. In the formulation of problem we consider that the approximate solutions satisfy weak form of entropy inequality only. We are unable to establish \emph{the minimal principle for entropy}. Now using \eqref{energy extension} and convexity of relative energy, we have
 \begin{align}\label{rel_eng}
\begin{split}
\mathrm{e}(\varrho,\vectorm,S|\varrho_\infty,\vectorm_\infty,S_\infty)\geq  \begin{cases}
(\varrho-\varrho_\infty)^2 + \vert \vectorm-\vectorm_\infty \vert^2 + ({S}- S_\infty)^2 \\
\quad \text{ if }{\frac{\varrho_\infty}{2} \leq \varrho \leq 2\varrho_\infty}\text{ and } {|S| \leq 2 |S_\infty|},\\
\vert \varrho-\varrho_\infty \vert + \vert \vectorm -\vectorm_\infty \vert + \vert S-S_\infty\vert ,
\\
\quad \text{otherwise.}
\end{cases}
\end{split}
\end{align}

\subsection{Uniform Bounds}
From our assumption on initial data \eqref{app_ic_1} we obtain
\begin{align} \label{eng_bd_1}
\Vert \mathrm{e}(\varrho_n,\vectorm_n,S_n|\varrho_\infty,\vectorm_\infty,S_\infty) \Vert_{L^\infty(0,T;L^1(\R^d))} \leq C.
\end{align}
Hence, uniform relative energy bound \eqref{rel_eng} and \eqref{eng_bd_1} imply
\begin{align}\label{den,mom_1}
\begin{split}
\Vert \varrho_n - \varrho_\infty \Vert_{L^{\infty}(0,T;L^1+L^2(\R^d))} &+
\big\Vert \vectorm_n - \vectorm_\infty \big\Vert_{L^{\infty}(0,T;L^{1}+L^2(\R^d;\R^d))} \\
&+ \Vert S_n - S_\infty \Vert_{L^{\infty}(0,T;L^1+L^2(\R^d))} \leq C.
\end{split}
\end{align}
\subsection{Defect measures for state variables $ \varrho, \vectorm $ and $ S $}
   Let us consider $ Z_n=(\varrho_{ n},\vectorm_n,S_n) $. From \eqref{den,mom_1} we conclude that the sequence $\{Z_n\}_{n\in \mathbb{N}}$ is bounded in $L^\infty(0,T; L^1_{\rm loc}({R^d}; \R^{d+2}))$. Thus using the fundamental theorem of Young measure as in Ball \cite{Ball1989} we ensure the existence of $\Nu  $ generated by $\{ Z_n \}_{n\in\mathbb{N}} $ and 
   $$ \Nu \in {L^{\infty}_{\text{weak-(*)}}((0,T)\times R^d; \mathcal{P}(\R \times \R^d \times \R))}  .$$
   On the other hand, we obtain
   \begin{align*}
   \varrho_{ n}-\varrho_{\infty} \rightarrow \overline{\varrho}-\varrho_\infty \text{ as  }n\rightarrow \infty \text{ weak-(*)ly in }L^{\infty}_{\text{weak-(*)}}(0,T; L^2+ \mathcal{M}(\R^d)) .
   \end{align*}
   We introduce the defect measure 
   \begin{align}\nonumber
   \mathfrak{C}_{\varrho}= \overline{\varrho}-\{(t,x)\mapsto \langle \Nu_{t,x}; \tilde{\varrho}\rangle\} 
   \end{align}
   and obtain, {by virtue of Lemma \ref{LA1},} $ \mathfrak{C}_{\varrho}\in L^{\infty}_{\text{weak-(*)}}(0,T; \mathcal{M}(\R^d))   $.\\
   Similarly for the sequences $\{(\vectorm_{ n}-\vectorm_{\infty}) \}_{n\in \mathbb{N}} $ and $\{ (S_n-S_\infty )\}_{n\in\mathbb{N}}$, we define the corresponding concentration defect measures as:
   \begin{align}\nonumber
   \begin{split}
   &\mathfrak{C}_{\vectorm}= \overline{\vectorm}- \{(t,x)\mapsto \langle \Nu_{t,x}; \tilde{\vectorm}\rangle\}\text{ and }\mathfrak{C}_{S}= \overline{S}- \{(t,x)\mapsto \langle \Nu_{t,x}; \tilde{S}\rangle\}.
   \end{split}
   \end{align}
   Using the fact $ \varrho_{ n} \geq 0 $ we can conclude 
	\begin{align}\nonumber
   \mathfrak{C}_{\varrho}\in L^{\infty}_{\text{weak-(*)}}(0,T; \mathcal{M}^{+}(\R^d)).  
   \end{align}
   We denote the barycenter of the Young measure as $ (\varrho, \vectorm, S) $ i.e.
   \begin{align}\nonumber
   \begin{split}
   &(\varrho(t,x),\vectorm(t,x),S(t,x))\\
   &=(\{(t,x)\mapsto \langle \Nu_{t,x}; \tilde{\varrho}\rangle\},  \{(t,x)\mapsto \langle \Nu_{t,x}; \tilde{\vectorm}\rangle\}, \{(t,x)\mapsto \langle \Nu_{t,x}; \tilde{S}\rangle\}).
   \end{split}
   \end{align}
   \begin{Rem}
   	As pointed out by Ball and Murat in \cite{BallM1989}, this baycenter coincides with the biting limit of the sequence $ \{Z_n \}_{n\in \mathbb{N}} $. 
   \end{Rem}
\subsection{Defect measures from non-linear terms}
    \subsubsection{Relative energy defect}
    We recall $L^{\infty}_{\text{weak-(*)}}(0,T;\mathcal{M}(\R^d))$ is the dual of $ L^1(0,T; C_0(\R^d)) $ and relative energy is uniformly bounded \eqref{eng_bd_1}. Passing to a suitable subsequence we obtain
    $$ \mathrm{e}(\varrho_n,\vectorm_n,S_n|\varrho_\infty,\vectorm_\infty,S_\infty)\rightarrow \overline{\mathrm{e}(\varrho,\vectorm,S|\varrho_\infty,\vectorm_\infty,S_\infty)}\text{ in } L^{\infty}_{\text{weak-(*)}}(0,T;\mathcal{M}(\R^d)). $$
    We introduce defect measures:
    \begin{itemize}
    	\item \textbf{Concentration defect:}\\
    	$ \mathfrak{R}^{\text{cd}}= \overline{\mathrm{e}(\varrho,\vectorm,S|\varrho_\infty,\vectorm_\infty,S_\infty)}- \langle \Nu_{t,x}; \mathrm{e}(\tilde{\varrho},\tilde{\vectorm},\tilde{S}|\varrho_\infty,\vectorm_\infty,S_\infty) \rangle ,$
    	\item \textbf{Oscillation defect:}\\
    	$ 
    	\mathfrak{R}^{\text{od}}=  \langle \Nu_{t,x}; \mathrm{e}(\tilde{\varrho},\tilde{\vectorm},\tilde{S}|\varrho_\infty,\vectorm_\infty,S_\infty) \rangle -\mathrm{e}(\varrho,\vectorm,S|\varrho_\infty,\vectorm_\infty,S_\infty), $
    	\item \textbf{Total  relative energy defect:}\\
    	$ \mathfrak{R}=  \mathfrak{R}^{\text{cd}}+\mathfrak{R}^{\text{od}}.$
    \end{itemize}
\begin{Rem}\label{rem-def-1}
	 As a direct consequence of \eqref{rel_eng} and Lemma \ref{LA1} we obtain 
	 \begin{align}\nonumber
	 \Vert \mathfrak{C}_{\varrho} \Vert_{L^{\infty}(0,T; \mathcal{M}(\R^d))} \leq  \Vert \mathfrak{R} \Vert_{L^{\infty}(0,T; \mathcal{M}(\R^d))} .
	 \end{align}
	 Similarly, we have
	 \begin{align*}
	 \Vert \vert \mathfrak{C}_\vectorm \vert \Vert_{L^{\infty}(0,T; \mathcal{M}(\R^d))}  + \Vert \vert \mathfrak{C}_S \vert \Vert_{L^{\infty}(0,T; \mathcal{M}(\R^d))} \leq \Vert \mathfrak{R} \Vert_{{L^{\infty}(0,T; \mathcal{M}(\R^d))} }.
	 \end{align*} 
\end{Rem}
\subsubsection{Finiteness of energy defect}
The definition of relative energy and \eqref{eng_bd_1} imply
\[ \Vert \mathrm{e}(\varrho_n,\vectorm_n,S_n)- \mathrm{e}(\varrho_\infty,\vectorm_{\infty},S_\infty) \Vert_{ L^{\infty}(0,T; L^2+ L^1(\R^d))} \leq C.\] 
In particular, we conclude 
\[ \begin{split}
\mathrm{e}(\varrho_n,\vectorm_n,S_n) -\mathrm{e}(\varrho_\infty,\vectorm_{\infty},S_\infty)  \rightarrow \overline{\mathrm{e}(\varrho,\vectorm,S)}-\mathrm{e}(\varrho_\infty,\vectorm_{\infty},S_\infty) \\
 \text{ weak(*)ly in } L^{\infty}_{\text{weak-(*)}}((0,T; L^2+\mathcal{M}(\R^d)) .
\end{split}
\]
Next we state a lemma that concludes the finiteness of the energy  defect
    \begin{Lemma}
    	Consider $ \mathfrak{R}_{\text{eng}} = \overline{\mathrm{e}(\varrho,\vectorm,S)}-{\mathrm{e}(\varrho,\vectorm,S)} $. Then $  \mathfrak{R}_{\text{eng}}  \in L^{\infty}(0,T;\mathcal{M}(
    	R^d))$ with $ \mathfrak{R}_{\text{eng}} (t)(\R^d)<\infty$ for a.e. $ t \in (0,T) $.
    \end{Lemma}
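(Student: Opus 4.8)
The plan is to bound $\mathfrak{R}_{\text{eng}}$ by the total relative energy defect $\mathfrak{R}$, which is already known to be finite: indeed $\mathfrak{R} = \mathfrak{R}^{\text{cd}} + \mathfrak{R}^{\text{od}} \in L^\infty(0,T;\mathcal{M}(\R^d))$ with finite mass for a.e.\ $t$, because it is the (weak-$(*)$ limit minus barycenter) defect of the uniformly bounded sequence $\mathrm{e}(\varrho_n,\vectorm_n,S_n|\varrho_\infty,\vectorm_\infty,S_\infty)$. The key algebraic observation is that, for each $n$, the relative energy differs from the energy only by an \emph{affine} function of $(\varrho_n,\vectorm_n,S_n)$:
\[
\mathrm{e}(\varrho_n,\vectorm_n,S_n|\varrho_\infty,\vectorm_\infty,S_\infty) = \mathrm{e}(\varrho_n,\vectorm_n,S_n) - \partial\mathrm{e}(\varrho_\infty,\vectorm_\infty,S_\infty)\cdot\big[(\varrho_n,\vectorm_n,S_n)-(\varrho_\infty,\vectorm_\infty,S_\infty)\big] - \mathrm{e}(\varrho_\infty,\vectorm_\infty,S_\infty).
\]
Since affine functions commute with both the weak-$(*)$ limit and with taking the barycenter $\langle\Nu_{t,x};\cdot\rangle$, the defect $\overline{(\cdot)} - \langle\Nu_{t,x};(\cdot)\rangle$ is insensitive to adding an affine term: subtracting $\partial\mathrm{e}(\varrho_\infty,\vectorm_\infty,S_\infty)\cdot[(\widetilde\varrho,\widetilde{\vectorm},\widetilde S)-(\varrho_\infty,\vectorm_\infty,S_\infty)]$ from $\mathrm{e}$ changes the weak limit and the barycenter by the \emph{same} quantity (here one uses the $L^2+\mathcal{M}$ convergences of $\varrho_n-\varrho_\infty$, $\vectorm_n-\vectorm_\infty$, $S_n-S_\infty$ recorded above, so that the barycenters of $\widetilde\varrho,\widetilde{\vectorm},\widetilde S$ are exactly $\varrho,\vectorm,S$ up to the respective concentration defects $\mathfrak{C}_\varrho,\mathfrak{C}_\vectorm,\mathfrak{C}_S$, which are themselves finite). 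Hence
\[
\mathfrak{R}_{\text{eng}} = \overline{\mathrm{e}(\varrho,\vectorm,S)} - \langle\Nu_{t,x};\mathrm{e}(\widetilde\varrho,\widetilde{\vectorm},\widetilde S)\rangle + \Big(\langle\Nu_{t,x};\mathrm{e}(\widetilde\varrho,\widetilde{\vectorm},\widetilde S)\rangle - \mathrm{e}(\varrho,\vectorm,S)\Big) + \partial\mathrm{e}(\varrho_\infty,\vectorm_\infty,S_\infty)\cdot(\mathfrak{C}_\varrho,\mathfrak{C}_\vectorm,\mathfrak{C}_S),
\]
i.e.\ $\mathfrak{R}_{\text{eng}} = \mathfrak{R}^{\text{cd}} + \mathfrak{R}^{\text{od}} + \partial\mathrm{e}(\varrho_\infty,\vectorm_\infty,S_\infty)\cdot(\mathfrak{C}_\varrho,\mathfrak{C}_\vectorm,\mathfrak{C}_S) = \mathfrak{R} + \partial\mathrm{e}(\varrho_\infty,\vectorm_\infty,S_\infty)\cdot(\mathfrak{C}_\varrho,\mathfrak{C}_\vectorm,\mathfrak{C}_S)$.

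From here the conclusion is immediate. We have $\mathfrak{R}\in L^\infty(0,T;\mathcal{M}^+(\R^d))$ with $\mathfrak{R}(t)(\R^d)<\infty$ a.e.\ (it is a weak-$(*)$ measurable family of measures, nonnegative by convexity of $\mathrm{e}(\cdot|\varrho_\infty,\vectorm_\infty,S_\infty)$ and Jensen, and of finite mass since it is dominated in mass by $\limsup_n\int_{\R^d}\mathrm{e}(\varrho_n,\vectorm_n,S_n|\varrho_\infty,\vectorm_\infty,S_\infty)(t)\,\dx \le C$). Moreover, by Remark \ref{rem-def-1}, $\Vert\mathfrak{C}_\varrho\Vert + \Vert|\mathfrak{C}_\vectorm|\Vert + \Vert|\mathfrak{C}_S|\Vert \le C\Vert\mathfrak{R}\Vert$ in $L^\infty(0,T;\mathcal{M}(\R^d))$, so each concentration defect is a finite-mass measure for a.e.\ $t$, and $\partial\mathrm{e}(\varrho_\infty,\vectorm_\infty,S_\infty)$ is a fixed finite vector in $\R^{d+2}$. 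Therefore $\mathfrak{R}_{\text{eng}}$, being a finite linear combination of $L^\infty(0,T;\mathcal{M}(\R^d))$ objects, lies in $L^\infty(0,T;\mathcal{M}(\R^d))$ and satisfies $|\mathfrak{R}_{\text{eng}}|(t)(\R^d) \le \mathfrak{R}(t)(\R^d) + |\partial\mathrm{e}(\varrho_\infty,\vectorm_\infty,S_\infty)|\big(\mathfrak{C}_\varrho(t)(\R^d) + |\mathfrak{C}_\vectorm|(t)(\R^d) + |\mathfrak{C}_S|(t)(\R^d)\big) < \infty$ for a.e.\ $t\in(0,T)$.

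The only point requiring care — the main (modest) obstacle — is the justification that the weak-$(*)$ limit of the affine part of $\mathrm{e}(\varrho_n,\vectorm_n,S_n)$ equals the affine part evaluated on the limits $\overline\varrho,\overline{\vectorm},\overline S$, and that these limits decompose as barycenter plus concentration defect; this is exactly the content of the preceding subsection (the $L^2+\mathcal{M}$ convergences and the definitions of $\mathfrak{C}_\varrho,\mathfrak{C}_\vectorm,\mathfrak{C}_S$), together with Lemma \ref{LA1} applied with $E(\varrho,\vectorm,S)=\mathrm{e}(\varrho,\vectorm,S|\varrho_\infty,\vectorm_\infty,S_\infty)$ and $\vc{G}$ ranging over the coordinate projections to control the concentration defects. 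One must also note the measurability in $t$: $\mathfrak{R}_{\text{eng}}$ inherits weak-$(*)$ measurability from $\overline{\mathrm{e}(\varrho,\vectorm,S)}$ and from $\langle\Nu_{t,x};\mathrm{e}(\widetilde\varrho,\widetilde{\vectorm},\widetilde S)\rangle\in L^1_{\rm loc}$, so the assignment $t\mapsto\mathfrak{R}_{\text{eng}}(t)$ is a bona fide element of $L^\infty(0,T;\mathcal{M}(\R^d))$.
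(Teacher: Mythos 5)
Your proof is correct and follows essentially the same route as the paper: you write the energy as the relative energy plus an affine correction, pass to the limit to obtain the identity $\mathfrak{R}_{\text{eng}} = \mathfrak{R} + \partial \mathrm{e}(\varrho_\infty,\vectorm_\infty,S_\infty)\cdot(\mathfrak{C}_\varrho,\mathfrak{C}_\vectorm,\mathfrak{C}_S)$, and then invoke the uniform energy bound together with Remark~\ref{rem-def-1} to control the concentration defects. The paper states this more tersely, but the algebraic decomposition and the appeal to Remark~\ref{rem-def-1} are exactly the same.
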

\begin{proof}
	We observe that \[ \begin{split}
	&\mathrm{e}(\varrho_{ n},\vectorm_{ n} S_n)- {\mathrm{e}(\varrho,\vectorm,S)} \\ 
	&= \mathrm{e}(\varrho_n,\vectorm_n,S_n|\varrho_\infty,\vectorm_\infty,S_\infty)-\mathrm{e}(\varrho,\vectorm,S|\varrho_\infty,\vectorm_\infty,S_\infty) \\
	&+ \partial \mathrm{e}(\varrho_\infty,\vectorm_\infty,S_\infty) \cdot (\varrho_{ n}-\varrho, \vectorm_{ n}- \vectorm, S_n-S)
	\end{split}\]
	From the above discussion along with Remark \ref{rem-def-1} we prove the result.
\end{proof}
\begin{Rem}
	In particular we have
	\[\mathfrak{R}= \mathfrak{R}_{\text{eng}}  -\partial \mathrm{e}(\varrho_\infty,\vectorm_\infty,S_\infty) \cdot (\mathfrak{C}_\varrho, \mathfrak{C}_\vectorm,\mathfrak{C}_S).\]
\end{Rem}
\begin{Rem}
	Convexity and lower semi-continuity of the map $ (\varrho,\vectorm,S)\mapsto \mathrm{e}((\varrho,\vectorm,S)) $ implies that \[\mathfrak{R}_{\text{eng}} \in L^{\infty}_{\text{weak-(*)}}((0,T;\mathcal{M}^{+}(
	R^d)) .\]
\end{Rem}

\subsubsection{Defect measures of the non-linear terms in momentum equation}
In approximate momentum equation we notice the presence of two non-linear terms $ \mathbb{1}_{\varrho_{ n}>0}\frac{\vectorm_{ n}\otimes \vectorm_{ n}}{\varrho_{ n}} $ and $ \mathbb{1}_{\varrho_{ n}>0}p(\varrho_{ n},S_n) $. We observe that 
\[ \left\Vert \mathbb{1}_{\varrho_{ n}>0}\frac{\vectorm_{ n}\otimes \vectorm_{ n}}{\varrho_{ n}} - \frac{\vectorm_\infty \otimes \vectorm_\infty}{\varrho_\infty} \right\Vert_{ L^{\infty}(0,T; L^2+ L^1(\R^d;\R^{d\times d}))} \leq C.  \] Thus we consider the \textbf{concentration defect} $ \mathfrak{C}_{m_1}^{\text{eng,cd}} $ and the \textbf{oscillation defect} $ \mathfrak{C}_{m_1}^{\text{eng,od}} $ as
\begin{align}\nonumber
\mathfrak{C}_{m_1}^{\text{eng,cd}}=  \overline{\frac{\vectorm\otimes \vectorm}{\varrho}}- \bigg \langle \Nu_{t,x}; \frac{\tilde{\vectorm}\otimes\tilde{ \vectorm}}{\tilde{\varrho}}  \bigg \rangle
\end{align}
and 
\begin{align}\nonumber
\begin{split}
\mathfrak{C}_{m_1}^{\text{eng,od}}= \bigg \langle \Nu_{t,x};\frac{\tilde{\vectorm}\otimes\tilde{ \vectorm}}{\tilde{\varrho}} \bigg \rangle- \mathbb{1}_{\varrho>0} \frac{\vectorm\otimes \vectorm}{\varrho}.
\end{split}
\end{align}
Similarly for the pressure term we define
\begin{align*}
\mathfrak{C}_{m_2}^{\text{eng,cd}}=  \overline{p(\varrho,S)\mathbb{I}}- \big \langle \Nu_{t,x}; p(\tilde{\varrho},\tilde{S})\mathbb{I} \big \rangle
\end{align*}
and 
\begin{align*}
\begin{split}
\mathfrak{C}_{m_2}^{\text{eng,od}}= \big \langle \Nu_{t,x}; p(\tilde{\varrho},\tilde{S})\mathbb{I} \big \rangle-p(\varrho,S)\mathbb{I}.
\end{split}
\end{align*}
We consider the total defect as $ \mathfrak{C}_{\text{eng}}=\mathfrak{C}_{m_1}^{\text{eng,cd}}+\mathfrak{C}_{m_1}^{\text{eng,od}}+\mathfrak{C}_{m_2}^{\text{eng,cd}}+\mathfrak{C}_{m_2}^{\text{eng,od}} $. \\
 For any $ \xi \in \R^d $, the function 
\begin{equation}\label{mdotxiconvex}
[\varrho,\vectorm ] \mapsto \begin{cases}
&\frac{\vert \vectorm \cdot \xi \vert^2 }{\varrho } \text{ if }\varrho>0,\\
&0,  \text{ if } \varrho = \vectorm  =0,\\
&\infty,  \text{ otherwise }
\end{cases}
\end{equation}
is convex lower semi-continuous. By virtue of \eqref{mdotxiconvex} we conclude 
\begin{align}\nonumber
\mathfrak{C}_{\text{eng}} \in  L^{\infty}_{\text{weak-(*)}}((0,T;\mathcal{M}^{+}(\R^d;\R^{d\times d}_{\text{sym}})).
\end{align}

\subsubsection{Comparison of defect measures $ \text{trace}(\mathfrak{C}_{\text{eng}}) $ and $ \mathfrak{R}_{\text{eng}} $}
With the help of the following relation
\[\text{trace}\bigg(\frac{\vectorm \otimes \vectorm}{\varrho}\bigg) = \frac{\vert \vectorm \vert^2}{\varrho} \text{ and } \text{trace}\bigg( \varrho^{\gamma} \exp{\bigg(\frac{S}{c_v \varrho}\bigg)}\mathbb{I}\bigg) = d  \varrho^{\gamma} \exp{\bigg(\frac{S}{c_v \varrho}\bigg)} \]
we conclude the existence of $ \Lambda_1,\Lambda_2>0 $ such that
\begin{align}\label{comp_def}
\Lambda_1 \mathfrak{R}_{\text{eng}}\leq \text{ trace}( \mathfrak{C}_{\text{eng}})\leq \Lambda_2 \mathfrak{R}_{\text{eng}}.
\end{align}

    \subsection{Limit passage and proof of the theorem \ref{th1}}
    Note that the main goal here is to pass the limit in continuity equations and momentum equation. 
    \subsubsection{Continuity equation} We perform the passage of limit in approximate continuity equation \eqref{app_cont} and obtain
   \begin{align*}
   \int_0^T \int_{\R^d} \big[\partial_{t} \phi  \text{ d}\overline{\varrho}(t) +   \nabla_{x} \phi \cdot \text{d}\overline{\vectorm} \big] \dt =0 ,
   \end{align*}
   for $ \phi \in C_c^1{((0,T)\times \R^d)} $. 
   In a more suitable notation we write
   \begin{align}\label{cont_eqn_w_def}
   \int_0^T \int_{\R^d} \big[\varrho \partial_{t} \phi +  \vectorm \cdot \nabla_{x} \phi \big] \dx \dt + \int_0^T \int_{ \R^d}\big[ \partial_{t}\phi \text{ d}\mathfrak{C}_{\varrho}+ \nabla_{x} \phi \cdot \text{d} \mathfrak{C}_{\vectorm}\big] \dt =0 ,
   \end{align}
   for $ \phi \in C_c^1{((0,T)\times \R^d)} $.
   	   Further we prove that \[ \overline{\varrho} \in C_{\text{weak(*)}}([0,T]; L^2+\mathcal{M}(\R^d)).\]
   	Using \eqref{app_ic1_1.2} we conclude 
   	\begin{align}\label{IC_meas}
   	\int_{K} \varrho_{ 0} \psi  \dx = \int_{ K} \psi\text{d}(\overline{\varrho}(0)),   
   	\end{align}
   for $  K\subset \R^d $, $ K  $ compact and $ \psi\in C_c(K) $.
   \subsubsection{Local equi-integrability of $ \{\varrho_{ n}\}_{n \in \mathbb{N}} $ and $ \{\vectorm_{ n}\}_{n\in \mathbb{N}} $}
   We assume the triplet $(\varrho, \vectorm, S)$ is a weak solution of complete Euler system with initial data $ (\varrho_{ 0},\vectorm_{0},S_0) $, i.e. equation of continuity reads as 
   \begin{align}\label{weak_cont_def}
   \int_0^T \int_{\R^d} \big[\varrho \partial_{t} \phi + \vectorm \cdot \nabla_{x} \phi \big] \dx \dt =-\int_{\R^d} \varrho_{ 0} \phi(0,\cdot)  \dx,
   \end{align}
   for any $ \phi \in C_c^1([0,T)\times \R^d) $.{
   	Eventually $ \varrho\in L^1_{\text{loc}}((0,T)\times\R^d) $ and $ \vectorm \in L^1_{\text{loc}}((0,T)\times\R^d;\R^d)  $ yield
   	\begin{align}\label{IC_young}
   	\int_{K} \varrho_{ 0} \psi  \dx = \int_{ K} \varrho(0,\cdot) \psi \dx ,   
   	\end{align}
   	for $ K $ compact subset of $ \R^d $ and $ \psi \in C_c(K) $. }\\
   On the other hand \eqref{weak_cont_def} along with \eqref{cont_eqn_w_def} imply
   \begin{align*}
   \partial_{t} \mathfrak{C}_{\varrho} + \Div \mathfrak{C}_{\vectorm} =0 
   \end{align*}
   in the sense of  distributions in $ (0,T)\times \R^d $.
   Using the fact $ \mathfrak{C}_\varrho \in  L^{\infty}_{\text{weak-(*)}}((0,T; \mathcal{M}(\R^d))$ and $ \mathfrak{C}_{\vectorm} \in  L^{\infty}_{\text{weak-(*)}}((0,T; \mathcal{M}(\R^d;\R^d)) $ we write the above relation as,
   \begin{align}\nonumber
   \begin{split}
   \int_0^T \int_{ \R^d} \partial_{t} \phi \text{ d}\mathfrak{C}_\varrho \dt +  \int_0^T \int_{ \R^d} \nabla_{x} \phi \cdot \text{ d}\mathfrak{C}_\vectorm \dt=0,  \text{ for } \phi  \in \mathcal{D}((0,T)\times  \R^d).
   \end{split}
   \end{align}
   We consider $ \phi(t,x)=\eta(t)\psi(x) $ with $ \eta \in \mathcal{D}(0,T) $ and $ \psi \in \mathcal{D}(\R^d) $.
   We rewrite the above  equation as 
   \begin{align}\nonumber
   \begin{split}
   \int_0^T \bigg( \int_{ \R^d}  \psi \text{ d}\mathfrak{C}_\varrho\bigg) \eta^{\prime}(t) \dt +  \int_0^T \bigg(  \int_{ \R^d} \nabla_{x} \psi \cdot \text{ d}\mathfrak{C}_\vectorm \bigg)\eta (t)\dt=0.
   \end{split}
   \end{align}
   Now using lemma  \eqref{Def_lem2} we observe that, for $ \eta \in \mathcal{D}(0,T) $ and $ \psi \in C^1(\R^d) $ we have 
   \begin{align}\nonumber
   \begin{split}
   \int_0^T \bigg( \int_{ \R^d}  \psi \text{ d}\mathfrak{C}_\varrho\bigg) \eta^{\prime}(t) \dt +  \int_0^T \bigg(  \int_{ \R^d} \nabla_{x} \psi \cdot \text{ d}\mathfrak{C}_\vectorm \bigg)\eta (t)\dt=0.
   \end{split}
   \end{align}
   Considering $ \psi=1 $ we obtain
   \begin{align}\nonumber
   \begin{split}
   \int_0^T \bigg( \int_{ \R^d}  \text{ d}\mathfrak{C}_\varrho\bigg) \eta^{\prime}(t) \dt =0.
   \end{split}
   \end{align}
   Here we can conclude that $ t \mapsto \int_{ \R^d}  \text{ d}\mathfrak{C}_\varrho(t) $ is absolutely continuous in $ (0,T) $ with is distributional derivative $ 0 $.\\
   \eqref{IC_meas} and \eqref{IC_young} lead us to conclude $ \mathfrak{C}_\varrho(0,\cdot)=0 $ in $ \R^d $. This implies,
   	\begin{align*}
   	\int_{ \R^d} \text{ d}\mathfrak{C}_\varrho(t) =0  \text{ for }t\in (0,T).
   	\end{align*}
   Hence we conclude $ \mathfrak{C}_\varrho =0$ for a.e. $ t\in (0,T) $. \\
   Let $ B \subset (0,T)\times \R^d $ {be a bounded Borel set.}
  Since $ \varrho_{ n}\geq 0 $ {and $ \mathfrak{C}_\varrho =0$} , 
	we conclude that $ \{\varrho_{ n}\}_{n\in \mathbb{N}} $ is equi-integrable in $ B $.\\
  
  We have $ \vectorm_{ n} = \sqrt{\varrho_{ n}} \frac{\vectorm_{ n}}{\sqrt{\varrho_{ n}}} $ and $ \frac{\vert\vectorm_{ n}\vert^2}{\varrho_{ n}}  $ is bounded in $ L^1(B) $. As a consequence of that we conclude 
  $ \{\vectorm_{ n}\}_{n\in \mathbb{N}} $ is {equi-integrable in $ B $}.
   \subsubsection{Momentum equation with defect }
   Now passing {to} limit in the momentum equation \eqref{app_mom}, it holds that
   \begin{align}\label{momentum_eqn_w_d_2}
   \begin{split}
   &\int_0^T \int_{\R^d} \bigg[ \vectorm \cdot \partial_{t} \vectorphi + \mathbb{1}_{\{\varrho>0\}}\frac{\vectorm \otimes \vectorm}{\varrho}: \nabla_{x} \vectorphi + \mathbb{1}_{\{\varrho>0\}} p(\varrho,S) \Div \vectorphi \bigg] \dx \dt \\
   &  + \int_0^T \int_{ \R^d} \nabla_{x} \vectorphi : \text{d}\mathfrak{C}_\text{eng} =0,
   \end{split}
   \end{align}
   for $ \vectorphi \in C_c((0,T)\times \R^d;\R^d) $.
   \subsubsection{Almost everywhere convergence}
    From our assumption that barycenter of the {Young} measure solves complete Euler system weakly implies 
   \begin{align*}
   \int_{ \R^d} \nabla_{x} \phi : \text{d}\mathfrak{C}_{\text{eng}}=0 \text{ for any }\phi\in C_c^1(\R^d;\R^d) \text{ for a.e. } t\in (0,T).
   \end{align*}
   Using Proposition \ref{Def_prop2} we conclude
   $  \mathfrak{C}_\text{eng} =0 $.
   Comparison of the defect measure implies
   $ \mathfrak{R}_\text{eng} =0 $. \\
   As a consequence of theorem \ref{wl1}  we have
   \begin{equation}\label{locweak}
   \mathrm{e}(\varrho_{ n},\vectorm_n, S_n) \rightarrow\mathrm{e}(\varrho,\vectorm,S)  \text{ weakly in } L^1{(B)} .
   \end{equation}
   Hence we deduce that
   \begin{align*}
   \overline{\mathrm{e}(\varrho,\vectorm,S)}=  \langle \Nu_{t,x}; \mathrm{e}(\tilde{\varrho},\tilde{\vectorm},\tilde{S}) \rangle = \mathrm{e}(\varrho,\vectorm,S) \text{ in } B.
   \end{align*} 
   Since $ \mathrm{e} $ is convex and strictly convex  in it's domain of positivity, we use a \emph{sharp form of Jensen's inequality} as described in Lemma 3.1, \cite{FH2019} to conclude that either,
   \begin{align*}
   \Nu_{t,x}=\delta_{\{\varrho(t,x), \vectorm(t,x), S(t,x)\}}
   \end{align*}
   or,
   \begin{align*}
   \text{supp}[\Nu]\subset \{ [\tilde{\varrho},\tilde{\vectorm},\tilde{S}] \vert \tilde{\varrho}=0,\; \tilde{\vectorm}=0,\; \tilde{S}\leq 0 \}.
   \end{align*}
   Recall our assumption \eqref{assump_ent_app1}, i.e.\[ S(t,x)=0 \text{ whenever } \varrho(t,x)=0  \text{ for  a.e. }(t,x) \in(0,T)\times \R^d . \] It implies $ \Nu_{t,x}=\delta_{\{\varrho(t,x), \vectorm(t,x), S(t,x)\}} $. From Lemma \ref{ym1} we conclude 
   $ \{ \varrho_{ n}, \vectorm_n, S_n \} $ converges to $ (\varrho,\vectorm,S) $ in measure. Passing to a suitable subsequence we obtain
   \begin{align}\label{a.e}
   \varrho_{ n} \rightarrow \varrho ,\; \vectorm_{ n} \rightarrow \vectorm \text{ and } S_n \rightarrow S \text{ a.e. in } (0,T) \times \R^d.
   \end{align}

\section{Convergence of approximate solutions from second approximation problem }
In this section our goal is to provide a proof of the theorem \eqref{th2}. From our formulation of the \textbf{second approximation problem } and hypothesis on the initial data \eqref{app_ic_2} {we can deduce}
 \emph{ the minimum principle for entropy}\eqref{ent_min_app2}, i.e., $ s_n\geq s_0 $. This helps us to achieve a finer estimate for the relative energy compared to \eqref{rel_eng}, that reads
	\begin{align}\label{rel_eng-2}
	\begin{split}
	\mathrm{e}(\varrho,\vectorm,S|\varrho_\infty,\vectorm_\infty,S_\infty)\geq  \begin{cases}
	(\varrho-\varrho_\infty)^2 + \vert \vectorm-\vectorm_\infty \vert^2 + (s-s_\infty)^2 \\
	\quad \text{ if } \frac{\varrho_\infty}{2} \leq \varrho \leq 2\varrho_\infty, \ \text{ and } \ |S| \leq 2\vert S_\infty \vert ,\\
	(1+ \varrho^\gamma) + \frac{\vectorm^2}{\varrho} + (1+S^\gamma),
	\\
	\quad \text{otherwise}
	\end{cases}
	\end{split}
	\end{align}
	One can {find} a detailed discussion about the above statement in Breit et al. \cite{BeFH2019}.
Without loss of generality we assume $ s_0 \geq 0 $, otherwise  we need to do a re-scaling by defining total entropy $ S_n {=} \varrho_{ n}(s_n-s_0) $.

\subsection{Uniform bounds}
We assume initial relative energy is uniformly bounded in \eqref{app_ic_1}. It implies
\begin{align*}
\Vert \mathrm{e}(\varrho_n,\vectorm_n,S_n|\varrho_\infty,\vectorm_\infty,S_\infty) \Vert_{L^\infty(0,T;L^1(\Om))} \leq C.
\end{align*}
This along with \eqref{rel_eng-2} gives us
\begin{align}\label{den,mom_2}
\begin{split}
&\Vert \varrho_n - \varrho_\infty \Vert_{L^{\infty}(0,T;L^\gamma+L^2(\R^d))} \leq C,\\
&\big\Vert \vectorm_n - \vectorm_\infty \big\Vert_{L^{\infty}(0,T;L^{\frac{2\gamma}{\gamma+1}}+L^2(\R^d))}\leq C.
\end{split}
\end{align}
Finally recalling the total entropy $S_n$ we have
\begin{align}\label{entr_2}
\begin{split}
&\Vert S_n - S_\infty \Vert_{L^{\infty}(0,T;L^\gamma+L^2(\R^d))} \leq C,\\
&\left\Vert  \frac{S_n}{\sqrt{\varrho_{ n}}} \right\Vert_{L^\infty(0,T;L^{2\gamma}(\Om))} \leq C.
\end{split}
\end{align}
\subsection{Weak Convergence}
From \eqref{den,mom_2} and \eqref{entr_2} we observe 
\begin{align*}
&\varrho_{ n}-\varrho_\infty \rightarrow \varrho-\varrho_\infty \text{ weak(*)ly in } L^{\infty}(0,T;L^{\gamma}+L^2(\R^d)),\\
&\vectorm_{n}-\vectorm_\infty \rightarrow \vectorm - \vectorm_\infty \text{ weak(*)ly in }  L^{\infty}(0,T;L^{\frac{2\gamma}{\gamma+1}}+L^2(\R^d)),\\
&S_{ n}-S_\infty \rightarrow S-S_\infty \text{ weak(*)ly in } L^{\infty}(0,T;L^{\gamma}+L^2(\R^d)),
\end{align*} 
passing to suitable subsequences as the case may be. Here also one can consider a Young measure $ \mathcal{V}  $ generated by $ (\varrho_{ n},\vectorm_{ n},S_n) $ such that \begin{align}
\mathcal{V} \in L^{\infty}_{\text{weak-(*)}}((0,T)\times \Om;\mathcal{P}(\R^{d+2} )).
\end{align} 
Since Young measures capture the weak limit we obtain 
\begin{align}\nonumber
\begin{split}
&(\varrho(t,x),\vectorm(t,x),S(t,x))\\
&=(\{(t,x)\mapsto \langle \mathcal{V}_{t,x}; \tilde{\varrho}\rangle\},  \{(t,x)\mapsto \langle \mathcal{V}_{t,x}; \tilde{\vectorm}\rangle\}, \{(t,x)\mapsto \langle \mathcal{V}_{t,x}; \tilde{S}\rangle\}).
\end{split}
\end{align}
\subsection{Defect measure}  Unlike Section 5, here we have the presence of a defect measure only in non linear terms. 
\subsubsection{Relative energy defect}
We know \[ L^{\infty}(0,T;L^1(\R^d)) \subset L^{\infty}_{\text{weak-(*)}}(0,T;\mathcal{M}(\R^d)). \]
In addition, $L^{\infty}_{\text{weak-(*)}}(0,T;\mathcal{M}(\R^d))$ is the dual of $ L^1(0,T; C_0(\R^d)) $, hence passing to a suitable subsequence we obtain,
 $$ \mathrm{e}(\varrho_n,\vectorm_n,S_n|\varrho_\infty,\vectorm_\infty,S_\infty)\rightarrow \overline{\mathrm{e}(\varrho,\vectorm,S|\varrho_\infty,\vectorm_\infty,S_\infty)}\text{ in } L^{\infty}_{\text{weak-(*)}}(0,T;\mathcal{M}(\R^d)). $$
 In particular we say
\[
\mathrm{e}_{\text{kin}}(\varrho_n,\vectorm_n|\varrho_\infty,\vectorm_{\infty})\rightarrow \overline{\mathrm{e}_{\text{kin}}(\varrho,\vectorm|\varrho_\infty,\vectorm_{\infty})} \text{ in } L^{\infty}_{\text{weak-(*)}}(0,T;\mathcal{M}(\R^d))  \]
and
\[\mathrm{e}_{\text{int}}(\varrho_n,S_n|\varrho_\infty,S_{\infty})\rightarrow \overline{\mathrm{e}_{\text{int}}(\varrho,S|\varrho_\infty,S_{\infty})} \text{ in } L^{\infty}_{\text{weak-(*)}}(0,T;\mathcal{M}(\R^d))  .\]
Using convexity and lower semi-continuity we have,
\begin{align}
\begin{split}
\mathfrak{R}_{e}=& \bigg( \overline{\mathrm{e}_{\text{kin}}(\varrho,\vectorm|\varrho_\infty,\vectorm_{\infty})}-\bigg(\frac{1}{2}\mathbb{1}_{\{\varrho>0\}} \frac{\vert \vectorm \vert^2}{\varrho}-\vectorm \cdot \vectoru_\infty+ \frac{1}{2}\varrho \vert \vectoru_\infty\vert^2 \bigg) \bigg)\\
&\quad+ \bigg( \overline{\mathrm{e}_{\text{int}}(\varrho,S|\varrho_\infty,S_{\infty})} - \bigg(\mathbb{1}_{\{\varrho>0\}}\mathrm{e}_{\text{int}}(\varrho,S) -\frac{\partial \mathrm{e}_{\text{int}}}{\partial \varrho}(\varrho_\infty, S_\infty) (\varrho-\varrho_\infty)\\
&\quad-\frac{\partial \mathrm{e}_{\text{int}}}{\partial S}(\varrho_\infty, S_\infty) (S-S_\infty) -\mathrm{e}_{\text{int}} (\varrho_\infty,S_\infty)\bigg)\bigg) \in L^{\infty}_{\text{weak-(*)}}(0,T;\mathcal{M}^{+}(\R^d)).
\end{split}
\end{align}
\subsubsection{Defects from the non linear terms in momentum equation }
We consider a map $ \mathbb{C} (\cdot, \cdot| \varrho_\infty,\vectoru_\infty):\R \times \R^d \rightarrow \R^{d\times d}  $ as $$ \mathbb{C}(\varrho,\vectoru| \varrho_{\infty},\vectoru_\infty)= \varrho (\vectoru-\vectoru_\infty)\otimes (\vectoru - \vectoru_\infty) .$$ Let, $ \xi \in \R^d $,  then with the help of $  \vectorm =\varrho \vectoru $ and $ \vectorm_\infty=\varrho_\infty \vectoru_{\infty} $, we conclude that the map 
\begin{align*}
(\varrho,\vectorm) \mapsto \mathbb{C}(\varrho,\vectorm| \varrho_\infty,\vectorm_\infty): (\xi \otimes \xi)
\end{align*}
is a convex function. We  have
\[ \frac{\vectorm_{n}\otimes \vectorm_n}{\varrho_{ n}}= \mathbb{C}(\varrho_n,\vectorm_n| \varrho_{\infty},\vectoru_\infty)+ \vectorm_n \otimes \vectoru_\infty + \vectoru_\infty\otimes \vectorm_{n} - \varrho_{ n} \vectoru_\infty\otimes \vectoru_\infty, \]
with
$$ \Vert \mathbb{C}(\varrho_{ n},\vectoru_n\vert \varrho_{\infty},\vectoru_\infty) \Vert_{L^{\infty}(0,T;L^1(\R^d;\R^{d\times d}))} \leq C .$$
It implies
\begin{align*}
\mathbb{C}(\varrho_{ n},\vectorm_n\vert \varrho_{\infty},\vectoru_\infty) \rightarrow \overline{\mathbb{C}(\varrho,\vectorm\vert \varrho_\infty,\vectoru_\infty)} \text{ weakly in } L^{\infty}_{\text{weak-(*)}}(0,T;\mathcal{M}(\R^d;\R^{d\times d}_{\text{sym}})).
\end{align*}
Define,
\begin{align}
\mathfrak{R}_{m_1}=  \overline{\mathbb{C}(\varrho,\vectorm\vert \varrho_\infty,\vectoru_\infty)} -\bigg[\frac{1}{2}\frac{ \vectorm \otimes \vectorm }{\varrho}- \vectorm \otimes \vectoru_\infty - \vectoru_\infty\otimes \vectorm + \varrho \vectoru_\infty\otimes \vectoru_\infty \bigg]
\end{align}
Similarly we define a map $ \mathbb{P}(\cdot, \cdot| \varrho_\infty,S_\infty) :\R \times \R \rightarrow \R^{d\times d}   $ such that
\begin{align*}
&\mathbb{P}(\varrho,S\vert \varrho_{\infty},S_\infty)\\&= \bigg(p(\varrho,S)-\frac{\partial p}{\partial \varrho}(\varrho_\infty, S_\infty) (\varrho-\varrho_\infty)-\frac{\partial p}{\partial S}(\varrho_\infty, S_\infty) (S-S_\infty) -p (\varrho_\infty,S_\infty)\bigg)\mathbb{I}.
\end{align*}
We define the defect measure
\begin{align}
\mathfrak{R}_{m_2}=  \overline{\mathbb{P}(\varrho,S\vert \varrho_\infty,S_\infty)} -\mathbb{P}(\varrho,S\vert \varrho_\infty,S_\infty ).
\end{align} 
Using \eqref{mdotxiconvex} we conclude,
\begin{align}
\mathfrak{R}_{m}= \mathfrak{R}_{m_1}+\mathfrak{R}_{m_2} \in  L^{\infty}_{\text{weak-(*)}}(0,T;\mathcal{M}^{+}(\R^d;\R^{d\times d}_{\text{sym}}) 
\end{align}
\subsubsection{Comparison of defect measures}
There exists scalars $ \Lambda_1,\Lambda_2 >0 $ such that
\begin{align}\label{comp_def_2}
\Lambda_1 \mathfrak{R}_{e} \leq \text{trace}(\mathfrak{R}_{m})\leq \Lambda_2\mathfrak{R}_{e}.
\end{align}
\begin{Rem}
	Clearly one can notice that here we do not need to define energy defect separately like section 5. Basically weak convergence of the state variables imply that the energy defect coincides with relative energy defect.  
\end{Rem}
\subsection{Passage of limit}
Now we pass limit in the equations of approximate solutions and obtain
\paragraph{Equation of continuity:}
\begin{align}
\int_0^T \int_{\R^d} \big[\varrho \partial_{t} \phi +  \vectorm \cdot \nabla_{x} \phi \big] \dx \dt=0 ,
\end{align}
for any $ \phi \in C_c^1((0,T)\times \R^d) $.
\paragraph{Momentum equation with defect:}
\begin{align}
\begin{split}
&\int_0^T \int_{\R^d} \bigg[ \vectorm \cdot \partial_{t} \vectorphi + \mathbb{1}_{\{\varrho>0\}}\frac{\vectorm \otimes \vectorm}{\varrho}: \nabla_{x} \vectorphi +\mathbb{1}_{\{\varrho>0\}}\ p(\varrho,S) \Div \vectorphi \bigg] \dx \dt \\
&+ \int_0^T \int_{ \R^d} \nabla_{x} \vectorphi : \text{d}\mathfrak{R}_{m}=0 ,
\end{split}
\end{align}
for any $ \vectorphi \in C_c^1((0,T)\times \R^d;\R^d) $.
\paragraph{Relative energy:}
\begin{align}
\begin{split}
\overline{\mathrm{e}(\varrho,\vectorm,S|\varrho_\infty,\vectorm_\infty,S_\infty)}= \mathrm{e}(\varrho,\vectorm,S|\varrho_\infty,\vectorm_\infty,S_\infty)+ \mathfrak{R}_{e}.
\end{split}
\end{align}
\subsection{Proof of the Theorem \ref{th2}}
\subsubsection{Disappearance of defect measures} 
We assume the triplet $(\varrho, \vectorm, S)$ is a weak solution of complete Euler system. It implies
\begin{align*}
\int_0^T \int_{ \R^d} \nabla_{x} \vectorphi : \text{d}\mathfrak{R}_{m}=0,
\end{align*}
for any $ \vectorphi \in C_c^1([0,T]\times \R^d;\R^d) $.
Thus applying Proposition \eqref{Def_prop2} we conclude $ \mathfrak{R}_{m}=0 $. Finally using \eqref{comp_def_2} we obtain $ \mathfrak{R}_e=0 $.\par 
Thus we also have,
\begin{align}
\begin{split}
\mathrm{e}(\varrho_{ n},\vectorm_{n},S_n|\varrho_\infty,\vectorm_\infty,S_\infty)\rightarrow \mathrm{e}(\varrho,\vectorm,S|\varrho_\infty,\vectorm_\infty,S_\infty)\\
\text{weak(*)ly  in }L^{\infty}_{\text{weak-(*)}}(0,T;\mathcal{M}(\R^d)).
\end{split}
\end{align}

\subsubsection{Almost everywhere convergence}
Let $ B \subset (0,T)\times \R^d$ be a compact set. Recall the Young measure generated by $ \{(\varrho_{ n},\vectorm_{ n},S_n)\}_{n\in \mathbb{N}} $ is $ \Nu $. From $ \mathfrak{R}_{e} =0$ we infer that
\[ \big\langle \Nu_{t,x};  \mathrm{e}(\tilde{\varrho},\tilde{\vectorm},\tilde{S}|\varrho_\infty,\vectorm_\infty,S_\infty) \big\rangle = \mathrm{e}(\varrho,\vectorm,S|\varrho_\infty,\vectorm_\infty,S_\infty)\text{ for a.e. }(0,T)\times \R^d.\]
We already have weak(*) convergence of $ \{\mathrm{e}(\varrho_{ n},\vectorm_{n},S_n|\varrho_\infty,\vectorm_\infty,S_\infty)\}_{n\in \mathbb{N}} $, using Lemma \eqref{wl1} we deduce that
\begin{align}
\begin{split}
\mathrm{e}(\varrho_{ n},\vectorm_{n},S_n|\varrho_\infty,\vectorm_\infty,S_\infty)\rightarrow \mathrm{e}(\varrho,\vectorm,S|\varrho_\infty,\vectorm_\infty,S_\infty)
\text{ weakly in }L^{1}(B).
\end{split}
\end{align}
Now convexity of $ \mathrm{e}(\cdot\vert \varrho_\infty, \vectorm_\infty,S_\infty) $ and Theorem 2.11 from Feireisl \cite{F2004b} helps us to conclude
\begin{align}
\varrho_{ n} \rightarrow \varrho, \vectorm_{ n} \rightarrow \vectorm  \text{ and }S_n\rightarrow S \text{ a.e. in }B.
\end{align}

\subsubsection{Local strong convergence}
We have $ \{\mathrm{e}(\varrho_n,\vectorm_n,S_n|\varrho_\infty,\vectorm_\infty,S_\infty)\}_{n\in \mathbb{N}} $ is equi-integrable in $ B $, in particular $ \{\mathrm{e}_{\text{int}}(\varrho_{ n},S_n)\}_{n\in \mathbb{N}} $ is equi-integrable in $ B $. As a trivial consequence we obtain \\
$ \{(\varrho_{ n}^\gamma,S_n^\gamma)\}_{n\in \mathbb{N}} $ is also equi-integrable.
Above statement along with almost everywhere convergence gives
\begin{align*}
\varrho_{ n}^\gamma \rightarrow \varrho^{\gamma} \text{ and }S_n^\gamma\rightarrow S^\gamma \text{weakly in } L^{1}(B).
\end{align*}
It implies
\begin{align}
\int_{ B} \varrho_{ n}^\gamma \dx \dt \rightarrow \int_{ B} \varrho^\gamma \dx \dt
\text{ and }\int_{ B} S_{ n}^\gamma \dx \dt \rightarrow \int_{ B} S^\gamma \dx \dt.\end{align}
These concludes the norm convergence i.e., \begin{align*}
\vert \varrho_{ n} \vert_{L^\gamma(B)} \rightarrow \vert \varrho \vert_{L^\gamma(B)}.
\end{align*}
Now weak convergence and norm convergence implies the strong convergence.
\begin{align*}
\varrho_{ n} \rightarrow \varrho\; \text{ in } L^{\gamma}(B).
\end{align*}
Similarly, for the total entropy we also obtain,
\begin{align*}
S_n \rightarrow S \text{ in } L^{\gamma}(B).
\end{align*}
To prove strong convergence of $ \{\vectorm_n\}_{n\in \mathbb{N}} $, first we consider $\mathbf{h}_n\equiv \frac{\vectorm_n}{\sqrt{\varrho_{ n}}}$.
We have
\begin{align*}
	\mathbf{h}_n\rightarrow \mathbf{h} \text{ weakly in } L^2(B;\R^d),\text{ for some }\mathbf{h} \in L^2(B).
\end{align*}
We already have
\[ \vectorm_n \rightarrow \vectorm \text{ weakly in } L^1(B;\R^d). \]
Strong convergence of $ \varrho_{ n} $ implies
\begin{align*}
	\sqrt{\varrho_{ n}} \mathbf{h}_n =\mathbf{m}_n \rightarrow \vectorm = \sqrt{\varrho} \mathbf{h} \text{ weakly in }L^1(B;\R^d).
\end{align*}
We observe that the set $ \{(t,x) \in B \vert \varrho(t,x)=0,\;  \vectorm(t,x)\neq 0\}$ is of zero Lebesgue measure. 
Thus we conclude,
\[ \frac{\vectorm_n}{\sqrt{\varrho_{ n}}} \rightarrow \mathbb{1}_{\{\varrho>0\}}\frac{\vectorm}{\sqrt{\varrho}} \text{ weakly in } L^2(B;\R^d) \]
Using the fact $$ \int_{ {B}} \frac{\vert \vectorm_{ n} \vert^2}{\varrho_{ n}} \dx \dt \rightarrow  \int_{ {B}} \mathbb{1}_{\{\varrho>0\}}\frac{\vert \vectorm \vert^2}{\varrho} \dx \dt $$ and strong convergence of $ \{\varrho_{ n}\}_{\{n\in \mathbb{N}\}} $ implies 
\begin{align}
	\vectorm_{ n} \rightarrow \vectorm \text{ in } L^{1}(B;\R^d).
\end{align} Since $ \varrho \in L^{\gamma}(B) $ we deduce that
\begin{align*}
\vectorm_{ n} \rightarrow \vectorm \text{ in } L^{\frac{2\gamma}{\gamma+1}}({B};\R^d).
\end{align*}
Relative energy is positive, lower semi-continuous and convex function. It implies
\[ \mathrm{e}(\varrho_{ n},\vectorm_{n},S_n|\varrho_\infty,\vectorm_\infty,S_\infty)\rightarrow \mathrm{e}(\varrho,\vectorm,S|\varrho_\infty,\vectorm_\infty,S_\infty) \text{ in } L^{1}(B). \]
We invoke the bounds \eqref{den,mom_2} and \eqref{entr_2} to conclude our desired strong convergences as stated in theorem. 

\centerline{ \bf Acknowledgments}
\vspace{2mm}

The work is supported by Einstein Stiftung, Berlin. I would like to thank Prof. E. Feireisl for his valuable suggestions and comments.
\begin{bibdiv}
	\begin{biblist}
		
		\bib{Ball1989}{incollection}{
			author={Ball, J.~M.},
			title={A version of the fundamental theorem for {Y}oung measures},
			date={1989},
			booktitle={P{DE}s and continuum models of phase transitions ({N}ice, 1988)},
			series={Lecture Notes in Phys.},
			volume={344},
			publisher={Springer, Berlin},
			pages={207\ndash 215},
			url={https://doi.org/10.1007/BFb0024945},
			review={\MR{1036070}},
		}
		
		\bib{BallM1989}{article}{
			author={Ball, J.~M.},
			author={Murat, F.},
			title={Remarks on {C}hacon's biting lemma},
			date={1989},
			ISSN={0002-9939},
			journal={Proc. Amer. Math. Soc.},
			volume={107},
			number={3},
			pages={655\ndash 663},
			url={https://doi.org/10.2307/2048162},
			review={\MR{984807}},
		}
		
		\bib{Bd2019}{article}{
			author={{Basari{\'c}}, D.},
			title={Vanishing viscosity limit for the compressible navier-stokes
				system via measure-valued solutions},
			date={2019Mar},
			journal={arXiv e-prints},
			pages={arXiv:1903.05886},
			eprint={1903.05886},
		}
		
		\bib{BS2007}{book}{
			author={Benzoni-Gavage, S.},
			author={Serre, D.},
			title={Multidimensional hyperbolic partial differential equations},
			series={Oxford Mathematical Monographs},
			publisher={The Clarendon Press, Oxford University Press, Oxford},
			date={2007},
			ISBN={978-0-19-921123-4; 0-19-921123-X},
			note={First-order systems and applications},
			review={\MR{2284507}},
		}
		
		\bib{BeFH2019}{article}{
			author={{Breit}, D.},
			author={{Feireisl}, E.},
			author={{Hofmanov{\'a}}, M.},
			title={Dissipative solutions and semiflow selection for the complete
				euler system},
			date={2019Apr},
			journal={arXiv e-prints},
			pages={arXiv:1904.00622},
			eprint={1904.00622},
		}
		
		\bib{BF2018b}{article}{
			author={B{\v r}ezina, J.},
			author={Feireisl, E.},
			title={Measure-valued solutions to the complete {E}uler system},
			date={2018},
			ISSN={0025-5645},
			journal={J. Math. Soc. Japan},
			volume={70},
			number={4},
			pages={1227\ndash 1245},
			url={https://doi.org/10.2969/jmsj/77337733},
			review={\MR{3868717}},
		}
		
		\bib{BF2018a}{article}{
			author={B\v{r}ezina, J.},
			author={Feireisl, E.},
			title={Measure-valued solutions to the complete {E}uler system
				revisited},
			date={2018},
			ISSN={0044-2275},
			journal={Z. Angew. Math. Phys.},
			volume={69},
			number={3},
			pages={Paper No. 57, 17},
			url={https://doi.org/10.1007/s00033-018-0951-8},
			review={\MR{3796390}},
		}
		
		\bib{CDK2015}{article}{
			author={Chiodaroli, E.},
			author={{De Lellis}, C.},
			author={Kreml, O.},
			title={Global ill-posedness of the isentropic system of gas dynamics},
			date={2015},
			ISSN={0010-3640},
			journal={Comm. Pure Appl. Math.},
			volume={68},
			number={7},
			pages={1157\ndash 1190},
			url={https://doi.org/10.1002/cpa.21537},
			review={\MR{3352460}},
		}
		
		\bib{CFF2019}{article}{
			author={{Chiodaroli}, E.},
			author={{Feireisl}, E.},
			author={{Flandoli}, F.},
			title={{Ill posedness for the full Euler system driven by multiplicative
					white noise}},
			date={2019-04},
			journal={arXiv e-prints},
			pages={arXiv:1904.07977},
			eprint={1904.07977},
		}
		
		\bib{CFK2015}{article}{
			author={Chiodaroli, E.},
			author={Feireisl, E.},
			author={Kreml, O.},
			title={On the weak solutions to the equations of a compressible heat
				conducting gas},
			date={2015},
			ISSN={0294-1449},
			journal={Ann. Inst. H. Poincar{\'e} Anal. Non Lin{\'e}aire},
			volume={32},
			number={1},
			pages={225\ndash 243},
			url={https://doi.org/10.1016/j.anihpc.2013.11.005},
			review={\MR{3303948}},
		}
		
		\bib{CK2014}{article}{
			author={Chiodaroli, E.},
			author={Kreml, O.},
			title={On the energy dissipation rate of solutions to the compressible
				isentropic {E}uler system},
			date={2014},
			ISSN={0003-9527},
			journal={Arch. Ration. Mech. Anal.},
			volume={214},
			number={3},
			pages={1019\ndash 1049},
			url={https://doi.org/10.1007/s00205-014-0771-8},
			review={\MR{3269641}},
		}
		
		\bib{CKMS2018}{article}{
			author={{Chiodaroli}, E.},
			author={{Kreml}, O.},
			author={{M{\'a}cha}, V.},
			author={{Schwarzacher}, S.},
			title={Non-uniqueness of admissible weak solutions to the compressible
				euler equations with smooth initial data},
			date={2018Dec},
			journal={arXiv e-prints},
			pages={arXiv:1812.09917},
			eprint={1812.09917},
		}
		
		\bib{DS2010}{article}{
			author={{De Lellis}, C.},
			author={Sz{\'e}kelyhidi, L., Jr.},
			title={On admissibility criteria for weak solutions of the {E}uler
				equations},
			date={2010},
			ISSN={0003-9527},
			journal={Arch. Ration. Mech. Anal.},
			volume={195},
			number={1},
			pages={225\ndash 260},
			url={https://doi.org/10.1007/s00205-008-0201-x},
			review={\MR{2564474}},
		}
		
		\bib{DM1988}{article}{
			author={DiPerna, R.~J.},
			author={Majda, A.},
			title={Reduced {H}ausdorff dimension and concentration-cancellation for
				two-dimensional incompressible flow},
			date={1988},
			ISSN={0894-0347},
			journal={J. Amer. Math. Soc.},
			volume={1},
			number={1},
			pages={59\ndash 95},
			url={https://doi.org/10.2307/1990967},
			review={\MR{924702}},
		}
		
		\bib{F2004b}{book}{
			author={Feireisl, E.},
			title={Dynamics of viscous compressible fluids},
			series={Oxford Lecture Series in Mathematics and its Applications},
			publisher={Oxford University Press, Oxford},
			date={2004},
			volume={26},
			ISBN={0-19-852838-8},
			review={\MR{2040667}},
		}
		
		\bib{F2016}{article}{
			author={Feireisl, E.},
			title={Vanishing dissipation limit for the {N}avier-{S}tokes-{F}ourier
				system},
			date={2016},
			ISSN={1539-6746},
			journal={Commun. Math. Sci.},
			volume={14},
			number={6},
			pages={1535\ndash 1551},
			url={https://doi.org/10.4310/CMS.2016.v14.n6.a4},
			review={\MR{3538361}},
		}
		
		\bib{FPAW2016}{article}{
			author={Feireisl, E.},
			author={Gwiazda, P.},
			author={{\'S}wierczewska-Gwiazda, A.},
			author={Wiedemann, E.},
			title={Dissipative measure-valued solutions to the compressible
				{N}avier-{S}tokes system},
			date={2016},
			ISSN={0944-2669},
			journal={Calc. Var. Partial Differential Equations},
			volume={55},
			number={6},
			pages={Art. 141, 20},
			url={https://doi.org/10.1007/s00526-016-1089-1},
			review={\MR{3567640}},
		}
		
		\bib{FH2019}{article}{
			author={Feireisl, E.},
			author={Hofmanov\'{a}, M.},
			title={On convergence of approximate solutions to the compressible
				{E}uler system},
			date={2020},
			ISSN={2524-5317},
			journal={Ann. PDE},
			volume={6},
			number={2},
			pages={Paper No. 11, 24},
			url={https://doi.org/10.1007/s40818-020-00086-8},
			review={\MR{4135633}},
		}
		
		\bib{FKKM2020}{article}{
			author={Feireisl, E.},
			author={Klingenberg, C.},
			author={Kreml, O.},
			author={Markfelder, S.},
			title={On oscillatory solutions to the complete euler system},
			date={2020Jul},
			ISSN={0022-0396},
			journal={Journal of Differential Equations},
			volume={269},
			number={2},
			pages={1521\ndash 1543},
			url={http://www.sciencedirect.com/science/article/pii/S0022039620300243},
		}
		
		\bib{FLMS_book}{book}{
			author={{Feireisl}, E.},
			author={{Lukacova-Medvidova}, M.},
			author={{Mizerova}, H.},
			author={{She}, B.},
			title={Numerical analysis of compressible fluid flow},
		}
		
		\bib{M1984}{book}{
			author={Majda, A.},
			title={Compressible fluid flow and systems of conservation laws in
				several space variables},
			series={Applied Mathematical Sciences},
			publisher={Springer-Verlag, New York},
			date={1984},
			volume={53},
			ISBN={0-387-96037-6},
			url={https://doi.org/10.1007/978-1-4612-1116-7},
			review={\MR{748308}},
		}
		
		\bib{S1983}{book}{
			author={Smoller, J.},
			title={Shock waves and reaction-diffusion equations},
			series={Grundlehren der Mathematischen Wissenschaften [Fundamental
				Principles of Mathematical Science]},
			publisher={Springer-Verlag, New York-Berlin},
			date={1983},
			volume={258},
			ISBN={0-387-90752-1},
			review={\MR{688146}},
		}
		
		\bib{Su2014}{article}{
			author={Sueur, F.},
			title={On the inviscid limit for the compressible {N}avier-{S}tokes
				system in an impermeable bounded domain},
			date={2014},
			ISSN={1422-6928},
			journal={J. Math. Fluid Mech.},
			volume={16},
			number={1},
			pages={163\ndash 178},
			url={https://doi.org/10.1007/s00021-013-0145-2},
			review={\MR{3171346}},
		}
		
		\bib{WZ2018}{article}{
			author={Wang, Y.~G.},
			author={Zhu, S.~Y.},
			title={On the vanishing dissipation limit for the full
				navier--stokes--fourier system with non-slip condition},
			date={2018Jun},
			ISSN={1422-6952},
			journal={Journal of Mathematical Fluid Mechanics},
			volume={20},
			number={2},
			pages={393\ndash 419},
			url={https://doi.org/10.1007/s00021-017-0326-5},
		}
		
	\end{biblist}
\end{bibdiv}

\end{document}